\journal{arXiv}
\newtheorem{theorem}{Theorem}
\newtheorem{lemma}[theorem]{Lemma}
\newtheorem{proposition}[theorem]{Proposition}
\newdefinition{definition}{Definition}
\newdefinition{hypothesis}{Hypothesis}
\newdefinition{remark}{Remark}
\newproof{proof}{Proof}
\DeclareMathOperator*{\interior}{int}
\DeclareMathOperator*{\cl}{cl}
\def\div{\bm{\nabla} \cdot}
\def\grad{\bm{\nabla}}
\def\defining{\overset{\mathbf{def} } = }
\def\outer{\bm {\widehat{\nu} } }
\def\ind{\boldsymbol{\mathbbm{1}}}
\def\n{\bm{\widehat{ n} } }
\def\N{\bm{\mathbbm{N} } }
\def\R{\bm{\mathbbm{R}}}
\def\A{{\mathcal A}}
\def\B{{\mathcal B}}
\def\C{{\mathcal C}}
\def\E{{\mathcal E}}
\def\V{\mathbf V}
\def\H{\mathbf H}
\def\X{\mathbf X}
\def\Y{\mathbf Y}
\def\L{\mathbf L}
\def\Hdiv{\mathbf{H_{div}}}
\def\g{\mathbf g}
\def\u{\mathbf u}
\def\v{\mathbf v}
\def\w{\mathbf w}
\def\x{\mathbf x}
\def\y{\mathbf y}
\def\stress{f_{\Sigma } } 
\def\flux{f_{\bm{\hat{n}} } }
\def\uone{\mathbf{u}_{1} }
\def\utwo{\mathbf{u}_{2} }
\def\wone{\mathbf{w}_{1} }
\def\wtwo{\mathbf{w}_{2} }
\def\vone{\mathbf{v}_{1} }
\def\vtwo{\mathbf{v}_{2 } }
\def\pone{p_{ 1 } }
\def\ptwo{p_{ 2} }
\def\qone{q_{ 1 } }
\def\qtwo{q_{ 2 } }
\def\rone{r_{ 1 } }
\def\rtwo{r_{ 2 } }
\def\Omeone{\Omega_{1}}
\def\Ometwo{\Omega_{2}}
\def\O{\mathcal{O}}
\def\U{\mathcal{U}}
\def\M{\mathcal{M}}
\def\triang{\mathcal{G}}
\def\tone{\mathcal{G}_{ 1 } }
\def\ttwo{\mathcal{G}_{ 2 } }
\def\edges{\mathcal{E}}
\begin{document}

\begin{frontmatter}

\title{The Interaction Between PDE and Graphs \\
in Multiscale Modeling}
\tnotetext[mytitlenote]{This material is based upon work supported by project HERMES 27798 from Universidad Nacional de Colombia,
Sede Medell\'in.}

\author{Fernando A. Morales \sep $\&$ \sep Sebasti\'an Naranjo \'Alvarez. 
}
\address{Escuela de Matem\'aticas
Universidad Nacional de Colombia, Sede Medell\'in \\
Calle 59 A No 63-20 - Bloque 43, of 106,
Medell\'in - Colombia}

\author[mymainaddress]{Fernando A Morales}

\cortext[mycorrespondingauthor]{Corresponding Author}
\ead{famoralesj@unal.edu.co}


\begin{abstract}
In this article an upscaled model is presented, for complex networks with highly clustered regions exchanging some abstract quantities in both, microscale and macroscale level. Such an intricate system is approximated by a partitioned open map in $\R^{2}$ or $\R^{3}$. The behavior of the quantities is modeled as flowing in the map constructed and thus it is subject to be described by partial differential equations. We follow this approach using the Darcy Porous Media, saturated fluid flow model in mixed variational formulation.
\end{abstract}

\begin{keyword}
Coupled PDE Systems, Mixed Formulations, Porous Media, Analytic Graph Theory, Complex Networks.
\MSC[2010] 05C82 \sep 05C10  \sep 35R02 \sep 35J50 
\end{keyword}

\end{frontmatter}


\section{Introduction}
A highly clustered network, is a graph such that its clustering coefficient is close to one. 
The clustering coefficient \cite{EstradaComplexNetworks} of a vertex $v$ in a graph is defined as the number of triangles connected to $v$ divided by the number of triples where $v$ is incident on two edges (triples ``centered" at $v$). The network clustering coefficient is the average value of the clustering for every node in the graph. In this work we consider a simple, connected network where two nodes are linked if and only if they exchange some abstract quantities and containing highly clustered regions or universes (communities, depending on the context). Hence, these universes also exchange the quantities of interest with other clustered regions in the network or multiverse, but at a different scale. Therefore, a macroscale graph representing each clustered region with a node and connecting two nodes whenever the universes they represent trade these quantities, is a natural upscaled model. Additionally, the number of nodes in each universe is very large compared to the number of clustered regions in the multiverse and the exchange of abstract quantities between universes is very large compared to the exchange amongst the individual nodes in the universe. A natural example is the exchange of goods and services amongst members of a nation and between the nations; other real world systems resembling these characteristics can be found in biotechnology \cite{ClusteringBiotechnology}, social and economic networks \cite{Jackson}, the Internet \cite{EstradaComplexNetworks, ComplexNetworksMarteenVanSteen}, etc. The aim of this article is to provide an overall description of how these quantities are exchanged at the macroscale level. Such necessity has already been stated implicitly in \cite{ComplexNetworksMarteenVanSteen} according to the quote: ``... what makes these networks complex is that they are generally so huge that it is impossible to understand or predict their overall behavior by looking into the behavior of individual nodes or links...". On the other hand, a highly descriptive model becomes impractical for complex networks because of the elevated computational costs, numerical instability and low quality solutions introduced by large-scale computations.  

The relationship between PDE and graphs has been subject of study in recent years. Most of the work has been done to provide the basic definitions of the domain associated with the graph and the strong differential operators defined by the PDE of interest, see \cite{GraphsPDE} for a global survey on the field. 
Authors commonly choose the 1-D simplicial complex given by an embedding in $\R^{N}$ of the studied graph, define strong operators in the edges and matching conditions on the vertices, together with appropriate function spaces. The mathematical approach is mainly classical and the technique heavily relies on the eigenfunction-eigenvalue expansion methods and/or maximum-minimum principles; as an example of this treatment see \cite{WaveGraphs}, for a broad exposition see \cite{KutchmentBerkolaiko}. The results depend crucially on the geometry of the embedding however, it is not clear how to make such choice. Seeking to gain independence from this limitation another approach consists in defining discrete difference operators, mimicking the properties of the PDE operators. Again, the subsequent mathematical treatment depends on eigenvector-eigenvalue methods and their properties, see \cite{ChungSpectral} for a deep discussion, followed by the construction of Green's functions, see \cite{DiffElastGraphs} as an example. Yet an intermediate approach addresses time-evolution problems using discrete models for space, as imposed by the graph itself, and continuous evolution in time under the hypothesis that the underlying combinatorial structure of the graph remains stationary, see \cite{DiffElastGraphs} for this view. In contrast with the previous achievements the present work preserves the continuous definitions for the operators in the PDE, and adapt the domain associated with the graph allowing for the use of weak variational formulation results. In that sense this article provides a dual approach to the previous results, however our main motivation is to attain upscaling criteria for highly complex networks which is a strong necessity as previously discussed. Next, we describe the model introduced in this paper.  


Each clustered region will be modeled by an open bounded, simply connected set in $\R^{N}$ where every point will represent an individual/molecule. In order to approximate the clustering, we propose that every point will exchange these quantities with every element in a small neighborhood. Following this concept if two universes exchange the quantities in question then, the sets representing them will share a non-negligible boundary. Moreover, according to \cite{ComplexNetworksMarteenVanSteen} the highly clustered regions are the ideal medium for rapid communication between the nodes. Therefore, we propose that the observed quantities can be realized as a fluid flow phenomenon in the modeling open set. 
 For simplicity we adopt the stationary, saturated, Darcy Flow model \eqref{Eq porous media strong} to approximate the exchange of the aforementioned quantities within the region
%
%
\begin{subequations}\label{Eq porous media strong}
	\begin{equation}\label{Eq Darcy Strong}
	a\,\u + \grad p + \g = 0\,,
	\end{equation}
	\begin{equation}\label{Eq conservative strong}
	\div \u - F = 0\,\quad \mathrm{in}\; \Omega \, .
	\end{equation}
	%
%
\end{subequations}
Here, $a$ is a positive coefficient describing the resistance to the flow of the medium. 
Additionally, coupling conditions will be introduced to describe the exchange between universes, as well as boundary conditions for the multiverse overall behavior. This is a PDE problem defined on a domain associated to a graph, however in order to model successfully the exchange of two quantities simultaneously, the available tools of analysis \cite{MoralesShow2} demand the underlying graph to be bipartite; this additional hypothesis will be necessary and included in Section \ref{Sec PDE Model}. Finally, the coefficient $a$ in \eqref{Eq Darcy Strong} can be interpreted as resistance to the flow within a network e.g. fees and taxes slowing down the exchange of goods and services in a nation state, paradigms impeding to permeate new ideas in a social network \cite{Jackson}, band width limiting the diffusion of information through the Internet \cite{ComplexNetworksMarteenVanSteen}, etc.

The paper is organized as follows. In Section \ref{Sec Preliminaries} we list the results and concepts needed for the exposition. Section \ref{Sec Graph Domain} defines the types of domain to be associated to the graph and proves their existence. Section \ref{Sec PDE Model} introduces the PDE model together with the necessary geometric associated notions, it also shows the formulation of the problem, proves its well-posedness and recovers the strong form. Finally, Section \ref{Sec Conclusion} presents the final discussion, and future work.  
%
%
\section{Preliminaries from Graph Theory and PDE}\label{Sec Preliminaries}
%
%
%
%
%
%
\subsection{Preliminaries from Graph Theory}
%
%
%
We begin this section with the basic, necessary definitions from graph theory \cite{BondyMurty}.
\begin{definition} \label{Def walk, path and cycle, length}
Let $G = (V, E)$ be a graph
\begin{enumerate}[(i)]
\item
The \textbf{degree} of a vertex $v$, is the number of edges that have an endpoint at $v$.
\item
A \textbf{walk} in $G$ from vertex $v_{0}$ to vertex $v_{j}$ is an alternating sequence
\begin{equation*}
\langle v_{0}, e_{1}, v_{1}, e_{2}, \ldots, v_{n-1}, e_{n}, v_{n}\rangle ,
\end{equation*}
of vertices and edges such that the endpoints of the edge $e_{i}$ are $v_{i-1}$ and $v_{i}$ for all $i = 1, 2, \ldots, n$.

\item
A \textbf{path} is a walk with no repeated edges and no repeated vertices, except possibly the initial and final vertices.

\item
A walk or path is \textbf{trivial} if it has only one vertex and no edges.

\item
A \textbf{cycle} is a non-trivial closed path i.e. it starts and ends on the same vertex.
\end{enumerate}
\end{definition}
\begin{definition}\label{Def simple graph}
A \textbf{self-loop} is an edge that joins a single vertex with itself. A \textbf{multi-edge} is a collection of two or more edges joining identical vertices. A \textbf{simple graph} has neither self-loops nor multi-edges.
\end{definition}
%
%
%
%
%
%
\begin{definition}\label{Def connected graph}
\begin{enumerate}[(i)]
\item A graph is \textbf{connected} if for every pair of vertices $u$ and $v$ there is a walk from $u$ to $v$.

\item An edge $e$ is a \textbf{bridge}, if the graph $G - e$ is not connected. 

\end{enumerate}
\end{definition}
\begin{definition}\label{Def trees and forests graph}
A graph with no cycles is a \textbf{forest}, if additionally the graph is connected it is said to be a \textbf{tree}. In a tree, a vertex of degree one is said to be a \textbf{leaf}.
\end{definition}
The following is a well-known result about trees, \cite{BondyMurty}.
\begin{proposition}
A tree with at least one edge has at least two leaves.	
\end{proposition}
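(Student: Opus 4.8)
The plan is to argue via a path of maximal length. Since the tree $T$ has at least one edge, Definition \ref{Def trees and forests graph} together with connectedness forces $T$ to have at least two vertices, and because $T$ is finite the family of its paths (Definition \ref{Def walk, path and cycle, length}) contains at least one of maximal length, say $P = \langle v_0, e_1, v_1, \ldots, e_n, v_n \rangle$ with $n \geq 1$. The whole argument reduces to the claim that the two endpoints $v_0$ and $v_n$ are leaves, i.e. each has degree one.

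First I would show $v_0$ is a leaf. Because $P$ is non-trivial, $e_1$ is incident to $v_0$, so $\deg(v_0) \geq 1$; suppose for contradiction that $\deg(v_0) \geq 2$, so there is an edge $e \neq e_1$ incident to $v_0$, with other endpoint $w$. I would split into two cases. If $w$ does not lie on $P$, then $\langle w, e, v_0, e_1, \ldots, e_n, v_n \rangle$ is a strictly longer path, contradicting the maximality of $P$. If instead $w = v_i$ for some $i \geq 1$, then the subpath of $P$ from $v_0$ to $v_i$ closed off by the edge $e$ is a non-trivial closed path, i.e. a cycle, contradicting that $T$ is acyclic. Either way we reach a contradiction, so $\deg(v_0) = 1$ and $v_0$ is a leaf; the identical reasoning applied to $e_n$ at the other end gives that $v_n$ is a leaf.

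It remains to guarantee that these two leaves are genuinely distinct. Here I would invoke the definition of path once more: a path has no repeated vertices except possibly the initial and final ones, and if $v_0 = v_n$ held, then $P$ would be a non-trivial closed path, hence a cycle, again contradicting that $T$ is a tree. Therefore $v_0 \neq v_n$, and $T$ has at least two leaves.

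I expect the only delicate point to be the case analysis at an endpoint, where one must carefully separate the possibility that the extra neighbor $w$ lies off the path (yielding a longer path) from the possibility that it lies on the path (yielding a forbidden cycle); the acyclicity and maximality hypotheses are each used exactly once, in complementary cases. A fully self-contained alternative, which I would mention only as a remark, is the degree-sum count: a tree on $n$ vertices has $n-1$ edges, so the degrees sum to $2(n-1)$, while connectedness for $n \geq 2$ rules out degree-zero vertices; if there were at most one leaf the degree sum would be at least $2(n-1)+1$, a contradiction. The path argument is preferable since it does not presuppose the edge-count identity.
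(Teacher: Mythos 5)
Your proof is correct: the maximal-path argument is carried out carefully, the case split at an endpoint (neighbor off the path gives a longer path, neighbor on the path gives a cycle) is exactly right, and you correctly handle the distinctness of the two endpoints. Note that the paper itself offers no proof of this proposition --- it is stated as a well-known fact with a citation to Bondy--Murty --- and your argument is the standard textbook one, so there is nothing to compare against beyond observing that your write-up is a complete and valid substitute for the omitted proof.
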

%
%
%
\begin{definition}\label{Def bipartite graph}
A \textbf{bipartite graph} is a graph whose vertex set $V$ can be partitioned in two subsets $U, W$ such that each edge of $G$ has one endpoint in $U$ and one endpoint in $W$. The pair $U, W$ is called a (vertex) bipartition of $G$, and $U$ and $W$ are called the bipartition subsets.
\end{definition}
Next we recall several definitions about planar graphs \cite{BondyMurty}. 
%
%
%
%
%
%
\begin{definition}\label{Def planar graph}
\begin{enumerate}[(i)]
\item A graph is said to be \textbf{embeddable} in the plane, or \textbf{planar}, if it can be drawn in the plane so that its edges intersect only at their ends. 

\item A planar embedding of a graph will be referred to as a \textbf{plane graph}. 

\item A plane graph $G$ partitions the rest of the plane into a number of arcwise-connected open sets. These sets are said to be the \textbf{faces} fo $G$. 

\item We say that a vertex $v$ of a plane graph G is an \textbf{outer vertex}, if it belongs to the boundary of the outer face of $G$.
\end{enumerate}
\end{definition}
\begin{definition}\label{Def dual planar graph}
Let $G$ be a plane graph, 
\begin{enumerate}[(i)]
\item 
The \textbf{dual graph} $G^{*}$ is defined as follows. Corresponding to each face $f$ of $G$ there is a vertex $f^{*}$ of $G^{*}$ and corresponding to each edge $e$ of $G$ there is an edge $e^{*}$ of $G^{*}$. Two vertices $f^{*}$ and $g^{*}$ are joined by the edge $e^{*}$ in $G^{*}$ if and only if their corresponding faces $f$ and $g$ are separated by the edge $e$ in $G$. 

\item 
The \textbf{plane dual} of the plane graph $G$ is a natural embedding of $G^{*}$ in the plane. It is obtained by placing a vertex $f^{*}$ in the corresponding face $f$ of $G$, and then drawing an edge $e^{*}$ in such a way that it crosses the corresponding edge $e$ of $G$ exactly once and crosses no other edge of $G$. We refer to such a drawing as plane dual of the plane graph $G$.   

\end{enumerate}
\end{definition}

\begin{definition}
	A \textbf{curve} in $\R^{2}$ or $\R^{3}$ is the continuous image of a closed interval, we say that a curve is \textbf{simple} if it does not intersect itself.
\end{definition}
We close this section recalling two well-known results \cite{BondyMurty, Wilson}.
\begin{theorem}\label{Th plane connected graphs characterization}
A plane graph $G$ is connected if and only if it is isomorphic to its double dual $G^{**}$. 
\end{theorem}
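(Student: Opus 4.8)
The plan is to reduce the whole statement to one structural fact—that the dual of an \emph{arbitrary} plane graph is connected—supplemented by a counting argument through Euler's formula and a local topological description of the faces of $G^{*}$. I would first isolate this as a lemma: for any plane graph $H$, the dual $H^{*}$ is connected. To see it, take two faces $f_{1}, f_{2}$ of $H$, pick interior points $p_{1}\in f_{1}$ and $p_{2}\in f_{2}$, and join them by a simple curve $\gamma$ that avoids the vertices of $H$ and meets its edges transversally. Every time $\gamma$ crosses an edge $e$ of $H$ it passes between two faces separated by $e$, i.e.\ it traverses the dual edge $e^{*}$; reading the crossings of $\gamma$ in order thus yields a walk in $H^{*}$ from $f_{1}^{*}$ to $f_{2}^{*}$. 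Since $f_{1},f_{2}$ were arbitrary, $H^{*}$ is connected.

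This lemma settles the reverse implication at once: the double dual $G^{**}=(G^{*})^{*}$ is the dual of the plane graph $G^{*}$, hence connected, so if $G\cong G^{**}$ then $G$ is connected as well. For the forward implication I would assume $G$ connected with $n$ vertices, $m$ edges and $f$ faces, so that $n-m+f=2$. By the lemma $G^{*}$ is connected, and it has $f$ vertices and $m$ edges, so a second application of Euler's formula forces $G^{*}$ to have exactly $2-f+m=n$ faces. The strategy is then to match these $n$ faces of $G^{*}$ with the $n$ vertices of $G$.

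The heart of the forward direction is the geometric claim that each face of $G^{*}$ contains exactly one vertex of $G$. Fixing a vertex $v$ of $G$, I would list the edges $e_{1},\dots,e_{k}$ incident to $v$ in cyclic order and observe that their duals $e_{1}^{*},\dots,e_{k}^{*}$, each crossing its partner once, link the dual vertices lying in the faces around $v$ into a closed cycle whose bounded region contains $v$ and no other vertex of $G$. This produces an injection of $V(G)$ into the set of faces of $G^{*}$; since both sets have cardinality $n$, the injection is a bijection. The plane dual construction of Definition \ref{Def dual planar graph} then places the unique vertex of $G^{**}$ sitting in that face at $v$, giving a bijection $V(G)\to V(G^{**})$. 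Tracking edges via $e\mapsto e^{*}\mapsto e^{**}$ and invoking the separation rule for duals, an edge $e=uv$ of $G$ corresponds to the edge $e^{**}$ of $G^{**}$ joining the $G^{**}$-vertices assigned to $u$ and $v$, so adjacency is preserved and the constructed map is a graph isomorphism.

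The main obstacle I anticipate is precisely this claim that every face of $G^{*}$ contains exactly one vertex of $G$, since it is here that connectivity of $G$ is indispensable. The cyclic-order argument around $v$ must be underpinned by a Jordan-curve-type statement guaranteeing that the cycle of dual edges genuinely separates $v$ from the other vertices, thereby excluding a face of $G^{*}$ that engulfs several vertices—or an entire component—of $G$, which is exactly the failure mode in the disconnected case. Turning this separation argument from a picture into a rigorous topological statement is the technical core of the proof.
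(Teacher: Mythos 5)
The paper does not actually prove Theorem~\ref{Th plane connected graphs characterization}: it is quoted as a well-known result and attributed to \cite{BondyMurty, Wilson}, so there is no internal proof to compare against. Your argument is essentially the standard one from those references, and its architecture is sound: the lemma that the dual of an arbitrary plane graph is connected (via a curve in general position joining two faces and reading off the crossings) is correct and immediately yields the reverse implication, and the Euler-formula count $f(G^{*}) = 2 - f(G) + m(G) = n(G)$ is the right reduction for the forward implication.

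The one place where the proposal is still a sketch --- and you flag this yourself --- is the claim that each face of $G^{*}$ contains exactly one vertex of $G$. The route you choose, assembling the duals $e_{1}^{*},\dots,e_{k}^{*}$ of the edges around $v$ into a closed walk that ``encloses'' $v$ and nothing else, is the harder of the two available routes: that closed walk need not be a simple closed curve (the faces around $v$ may repeat, and a bridge at $v$ contributes a loop of $G^{*}$), so its ``bounded region'' is not well defined without substantial extra topology. A cleaner way to close the gap, using the counting you already have, is to prove instead that every face of $G^{*}$ contains \emph{at least} one vertex of $G$. If $G$ has no edge, connectedness makes it a single vertex and the statement is trivial; otherwise every face $F$ of the connected graph $G^{*}$ has some edge $e^{*}$ on its boundary. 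Let $q$ be the unique crossing point of $e$ and $e^{*}$; the half-arc of $e$ starting at $q$ on the $F$-side of $e^{*}$ meets the drawing of $G^{*}$ nowhere else, hence remains in $F$ and delivers an endpoint of $e$, a vertex of $G$, inside $F$. Since there are exactly $n$ faces and $n$ vertices and each vertex lies in exactly one face, ``at least one per face'' already forces ``exactly one per face'', and the same local picture at $q$ shows that $e^{**}$ joins the $G^{**}$-vertices assigned to the two endpoints of $e$, completing the isomorphism without any Jordan-curve separation argument.
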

\begin{theorem}\label{Th embeddin in 3-d of any graph}
Every finite graph is embeddable in $\R^{3}$.  
\end{theorem}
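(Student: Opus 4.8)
The plan is to realize the graph concretely by placing its vertices along a single curve chosen so that no four of them are ever coplanar, and then drawing every edge as a straight segment. Let $G$ have vertices $v_{1}, \ldots, v_{n}$. First I would fix $n$ distinct real parameters $t_{1} < t_{2} < \cdots < t_{n}$ and map the vertices onto the \emph{moment curve} $\gamma(t) = (t, t^{2}, t^{3})$ in $\R^{3}$, setting $P_{i} = \gamma(t_{i})$. Each edge $\{v_{i}, v_{j}\}$ is then drawn as the straight segment joining $P_{i}$ and $P_{j}$. It remains to verify that this drawing is an embedding in the sense of Definition \ref{Def planar graph}, i.e. that distinct edges meet only at shared endpoints.

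The crux is the elementary fact that in $\R^{3}$ two line segments can meet at an interior point only if their four endpoints lie in a common plane. Thus for two edges with no common vertex, say $\{v_{i}, v_{j}\}$ and $\{v_{k}, v_{l}\}$ with all four indices distinct, a crossing would force $P_{i}, P_{j}, P_{k}, P_{l}$ to be coplanar. I would rule this out by the key computation: four points on $\gamma$ are coplanar precisely when
\begin{equation*}
\det \begin{pmatrix} 1 & t_{i} & t_{i}^{2} & t_{i}^{3} \\ 1 & t_{j} & t_{j}^{2} & t_{j}^{3} \\ 1 & t_{k} & t_{k}^{2} & t_{k}^{3} \\ 1 & t_{l} & t_{l}^{2} & t_{l}^{3} \end{pmatrix} = 0,
\end{equation*}
and this is a Vandermonde determinant, equal up to sign to $\prod_{a < b}(t_{b} - t_{a})$ over the four chosen parameters, which is nonzero because the $t$-values are distinct. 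Hence no four of the $P_{i}$ are coplanar and disjoint edges cannot cross.

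Second, for two edges sharing exactly one vertex, say $\{v_{i}, v_{j}\}$ and $\{v_{i}, v_{k}\}$, I would argue they meet only at the common point $P_{i}$; otherwise the segments would overlap along a subsegment, forcing $P_{i}, P_{j}, P_{k}$ to be collinear, which the analogous $3 \times 3$ Vandermonde argument forbids, since three distinct points on $\gamma$ are never collinear. This exhausts all cases, so the straight-line drawing is a valid embedding and the theorem follows.

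I expect the main obstacle to be stating cleanly the incidence geometry in $\R^{3}$ (crossing forces coplanarity, overlap forces collinearity), since the algebraic heart, the non-vanishing of the Vandermonde determinant, is immediate. One could alternatively give a book embedding, placing all vertices on a common line and routing each edge through its own half-plane page so that distinct pages meet only along that spine; this sidesteps the crossing analysis but requires bookkeeping of page assignments, so I would favor the moment-curve argument for its brevity.
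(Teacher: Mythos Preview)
Your argument is correct and is in fact the classical moment-curve proof of this result. Note, however, that the paper does not supply its own proof of this theorem: it is stated in the preliminaries as a well-known fact and attributed to the references \cite{BondyMurty, Wilson}, so there is no in-paper argument to compare against. Your approach is precisely one of the standard textbook proofs; the alternative book-embedding route you mention at the end is the other common one. Either would be acceptable as a proof of this cited background result, and your write-up is clean apart from one small imprecision: the ``$3\times 3$ Vandermonde'' phrasing for the collinearity step is a bit loose, since collinearity of three points in $\R^{3}$ is not literally a single $3\times 3$ determinant condition. It would be cleaner to observe that if three of the $P_{i}$ were collinear then, together with any fourth vertex, they would yield four coplanar points, contradicting the $4\times 4$ Vandermonde computation you already carried out (and the case $n\leq 3$ is trivial).
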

%
%
%
%
\subsection{Preliminaries from PDE}
%
%
We start this section introducing the general notation. In the present work vectors are denoted by boldface letters as are vector-valued functions and corresponding function spaces. The symbols $\grad$ and $\div$ represent the gradient and divergence operators respectively. The dimension is indicated by $N$ which will be equal to $2$ or $3$ depending on the context. Given a function $f: \R^{N}\rightarrow \R$ then  $\int_{\mathcal {M} } f\,dS$ denotes the integral on the $N-1$ dimensional manifold $\mathcal{M}\subseteq \R^{\! N}$. Analogously, $\int_{A} f\, d\x$ stands for the integral in the set $A\subseteq \R^{\! N}$; whenever the context is clear we simply write $\int_{A} f$. The symbol $\outer$ denotes the outwards normal vector on the boundary of a given domain $\mathcal{O}\subseteq \R^{N}$. Given an open set $M$ of $\R^{N}$, the symbols $\Vert\cdot\Vert_{0,M}$,  $\Vert\cdot\Vert_{1,M}$, $\Vert\cdot\Vert_{1/2, \partial M}$, $\Vert\cdot\Vert_{-1/2,\partial M}$ and $\Vert\cdot\Vert_{\Hdiv(M)}$ denote the $L^{2}(M)$, $H^{1}(M)$, $H^{1/2}(\partial M)$, $H^{-1/2}(\partial M)$ and $\Hdiv(M)$ norms respectively, while $\vert M\vert$ represents the Lebesgue measure of $M$ in $\R^{2}$ or $\R^{3}$ depending on the context.

Next, we present the general abstract problem to be studied in this article. Let $\X$ and $\Y$ be Hilbert spaces and let $\A: \X\rightarrow \X $$'$, $\B: \X\rightarrow \Y $$'$ and $\C: \Y\rightarrow \Y $$'$ be continuous linear operators, we are to study the following problem
\begin{equation}\label{Pblm operators abstrac system}
%
%
\begin{split}
\text{Find a pair}\; (\x, \y)\in \X\times \Y: \quad 
\A \x + \B '\y  = F_{1}\quad \text{in}\; \X ' , \\
%
- \B \x  + \C \y = F_{2} \quad \text{in}\; \Y ' .
\end{split}
\end{equation}
Here $F_{1}\in \X '$ and $F_{2} \in \Y '$. Several variations of systems such as the above have been extensively studied, we present below a well-known result \cite{GiraultRaviartFEM} to be used in this work.
\begin{theorem}\label{Th well posedeness mixed formulation classic}
Assume that the linear operators $\A: \X\rightarrow \X'$, $\B: \X\rightarrow \Y '$, $\C: \Y\rightarrow \Y '$ are continuous and
\begin{enumerate}[(i)]
\item $\A$ is non-negative and $\X$-coercive on $\ker (\B)$. 

\item $\B$ satisfies the inf-sup condition 
\begin{equation}\label{Ineq general inf-sup condition}
   \inf_{\y \, \in \, \Y} \sup_{\x \, \in \, \X}
   \frac{\vert  \B\x(\y) \vert }{\Vert \x\Vert_{\X}\, \Vert \y \Vert_{\Y}}  >0 \, .
\end{equation}

\item $C$ is non-negative symmetric.
\end{enumerate}
Then for every $F_{1} \in \X '$ and $F_{2} \in \Y '$ the problem \eqref{Pblm operators abstrac system}
has a unique solution in $(\x, \y)\in \X \times \Y$, which satisfies the estimate
\begin{equation} \label{mix-est}
\Vert\x\Vert_{\X} + \Vert \y\Vert_{\Y} \leq c\, (\Vert F_{1}\Vert_{\X '} + \Vert F_{2}\Vert_{\Y '}).
\end{equation}
%
\end{theorem}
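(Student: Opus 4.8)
The plan is to recast the coupled system \eqref{Pblm operators abstrac system} as a single variational problem on the product Hilbert space $\X\times\Y$ and then to invoke the generalized Lax--Milgram theorem (the Banach--Necas--Babuska theorem). I would introduce the bounded bilinear form
\begin{equation*}
\mathcal{M}\big( (\x,\y),(\v,\q) \big) := \langle \A\x,\v\rangle + \langle \B'\y,\v\rangle - \langle \B\x,\q\rangle + \langle \C\y,\q\rangle
\end{equation*}
on $(\X\times\Y)\times(\X\times\Y)$, together with the functional $(\v,\q)\mapsto \langle F_1,\v\rangle + \langle F_2,\q\rangle$. Testing the two equations of \eqref{Pblm operators abstrac system} against $\v\in\X$ and $\q\in\Y$ shows that the original problem is equivalent to finding $(\x,\y)$ with $\mathcal{M}((\x,\y),(\v,\q)) = \langle F_1,\v\rangle + \langle F_2,\q\rangle$ for every test pair. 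Continuity of $\mathcal{M}$ is immediate from the continuity of $\A$, $\B$, $\C$, so the whole task reduces to establishing a global inf--sup condition for $\mathcal{M}$ on the product space.

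First I would restate hypothesis (ii) in operator form. The inequality \eqref{Ineq general inf-sup condition} is equivalent to the existence of $\beta>0$ with $\Vert\B'\y\Vert_{\X'}\geq\beta\,\Vert\y\Vert_{\Y}$ for all $\y\in\Y$; by the closed range theorem this makes $\B:\X\to\Y'$ surjective and its restriction to $V^{\perp}$, where $V:=\ker\B$, an isomorphism onto $\Y'$ whose inverse is bounded by $1/\beta$. In particular every $\x\in\X$ splits as $\x=\x_0+\x_\perp$ with $\x_0\in V$, $\x_\perp\in V^\perp$, and $\Vert\B\x\Vert_{\Y'}\geq\beta\,\Vert\x_\perp\Vert_{\X}$. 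The core of the argument is then to produce, for an arbitrary fixed $(\x,\y)$, a test pair that realizes the inf--sup lower bound. The crucial observation is the cancellation obtained by testing against $(\x,\y)$ itself: since $\langle\B'\y,\x\rangle=\langle\y,\B\x\rangle=\langle\B\x,\y\rangle$, the two off-diagonal terms cancel and
\begin{equation*}
\mathcal{M}\big( (\x,\y),(\x,\y) \big) = \langle\A\x,\x\rangle + \langle\C\y,\y\rangle \geq 0,
\end{equation*}
by the non-negativity in (i) and (iii).

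This diagonal test controls the coercive component $\x_0\in V$ through the $\X$-coercivity of $\A$ on $\ker\B$, and it controls the $\C$-seminorm of $\y$, but it sees neither the $V^\perp$-component of $\x$ nor the full norm of $\y$. To recover these I would add two correction test functions: a pair $(\v_1,0)$ with $\v_1$ almost attaining the supremum in the inf--sup condition for $\B'\y$, contributing a term bounded below by a multiple of $\beta\Vert\y\Vert_\Y$ once the competing $\langle\A\x,\v_1\rangle$ is absorbed; and a pair $(0,\q_1)$ with $\q_1$ chosen, via $\Vert\B\x\Vert_{\Y'}\geq\beta\Vert\x_\perp\Vert_\X$, so that $-\langle\B\x,\q_1\rangle$ dominates $\Vert\x_\perp\Vert_\X$. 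A suitably weighted combination $(\v,\q)=\theta_1(\x,\y)+\theta_2(\v_1,0)+\theta_3(0,\q_1)$ with small positive $\theta_i$ then yields a global constant $\gamma>0$ with $\mathcal{M}((\x,\y),(\v,\q))\geq\gamma(\Vert\x\Vert_\X+\Vert\y\Vert_\Y)(\Vert\v\Vert_\X+\Vert\q\Vert_\Y)$.

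I expect the main obstacle to be precisely this last piece of bookkeeping. Because $\C\neq0$ couples the two equations, the diagonal test no longer controls $\Vert\y\Vert_\Y$ on its own, and each correction term spawns cross contributions (for instance $\langle\A\x_\perp,\v_1\rangle$ and $\langle\C\y,\q_1\rangle$) that must be dominated by the two good quantities $\langle\A\x,\x\rangle$ and $\langle\C\y,\y\rangle$ through Young's inequality with carefully tuned weights $\theta_i$; the non-negativity and symmetry of $\C$ are exactly what keep these cross terms harmless. Once $\gamma$ is fixed, the symmetry of $\A$ and $\C$ together with the skew coupling makes the transposed form satisfy the same lower bound, so the non-degeneracy (adjoint injectivity) condition holds automatically and the generalized Lax--Milgram theorem delivers a unique $(\x,\y)\in\X\times\Y$ with $\Vert\x\Vert_\X+\Vert\y\Vert_\Y\leq\gamma^{-1}(\Vert F_1\Vert_{\X'}+\Vert F_2\Vert_{\Y'})$, which is the estimate \eqref{mix-est}.
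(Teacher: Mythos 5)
First, a point of comparison: the paper does not prove this statement at all --- Theorem \ref{Th well posedeness mixed formulation classic} is quoted from \cite{GiraultRaviartFEM} as a known result --- so your attempt can only be judged on its own terms. Your architecture (recast \eqref{Pblm operators abstrac system} as one bilinear form $\mathcal{M}$ on $\X\times\Y$, prove a global inf--sup bound plus adjoint non-degeneracy, invoke Banach--Ne\v{c}as--Babu\v{s}ka) is a legitimate route, and your two structural observations are correct: the off-diagonal terms cancel on the diagonal test, and the inf--sup condition makes $\B$ restricted to $(\ker\B)^{\perp}$ an isomorphism onto $\Y'$ with inverse bounded by $1/\beta$. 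One small correction: $\A$ is not assumed symmetric, so you cannot appeal to its symmetry for the transposed problem; fortunately adjoint injectivity follows from the same non-negativity argument, because $\langle\C\q,\q\rangle=0$ with $\C$ symmetric non-negative forces $\C\q=0$, hence $\B\v=0$, hence $\v=0$ by kernel coercivity, hence $\q=0$ by the inf--sup condition.

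The genuine gap is in the central step, the ansatz $(\v,\q)=\theta_1(\x,\y)+\theta_2(\v_1,0)+\theta_3(0,\q_1)$. Track the coefficients: writing $\x=\x_0+\x_\perp$ with $\x_0\in\ker\B$, the diagonal test gives at best $\tfrac{\alpha}{2}\Vert\x_0\Vert^2-\tfrac{2\Vert\A\Vert^2}{\alpha}\Vert\x_\perp\Vert^2+\langle\C\y,\y\rangle$, while the $\q_1$ test gives $\theta_3\beta\Vert\x_\perp\Vert^2-\theta_3\Vert\C\Vert^{1/2}\langle\C\y,\y\rangle^{1/2}\Vert\x_\perp\Vert$; after Young's inequality the $\langle\C\y,\y\rangle$ it spawns must be absorbed by the diagonal term's $\theta_1\langle\C\y,\y\rangle$, and the surviving multiple of $\Vert\x_\perp\Vert^2$ must still dominate $\theta_1\tfrac{2\Vert\A\Vert^2}{\alpha}\Vert\x_\perp\Vert^2$. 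These two requirements force roughly $\tfrac{2\Vert\A\Vert^2}{\alpha\beta}<\tfrac{\theta_3}{\theta_1}<\tfrac{4\beta}{\Vert\C\Vert}$, an empty interval once $\Vert\A\Vert^2\Vert\C\Vert\gtrsim\alpha\beta^2$; only the ratio of the weights matters, so no tuning rescues it. The theorem is nevertheless true, and the repair is to run the bound directly on the residuals instead of constructing a test pair: with $F_1=\A\x+\B'\y$ and $F_2=-\B\x+\C\y$, the diagonal test gives $\langle\A\x,\x\rangle+\langle\C\y,\y\rangle\le\Vert F_1\Vert\,\Vert\x\Vert+\Vert F_2\Vert\,\Vert\y\Vert$; the symmetry of $\C$ gives $\Vert\C\y\Vert_{\Y'}\le\Vert\C\Vert^{1/2}\langle\C\y,\y\rangle^{1/2}$; hence $\beta\Vert\x_\perp\Vert\le\Vert\C\y\Vert_{\Y'}+\Vert F_2\Vert$ is bounded by the square root of (data)$\times$(solution) plus data, which is at most $\epsilon(\Vert\x\Vert+\Vert\y\Vert)$ plus $C_\epsilon$ times the data for arbitrarily small $\epsilon$ --- exactly the flexibility your fixed-weight absorptions lack. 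Testing $F_1$ against $\x_0$ and using kernel coercivity then controls $\Vert\x_0\Vert$, the inf--sup applied to $\B'\y=F_1-\A\x$ controls $\Vert\y\Vert$, and a small $\epsilon$ closes the loop to give \eqref{mix-est}; this a priori estimate yields injectivity and closed range, and the adjoint injectivity above upgrades it to the claimed bijection.
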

\section{The Graph Domain}\label{Sec Graph Domain}
%
%
This section is aimed to the construction of a particular topological domain for a given plane graph. The domain must be suitable for setting a PDE problem. To this end, we introduce two paramount definitions of domains associated to graphs depicted below
\begin{figure}[h] 
	\centering
	\begin{subfigure}[Downscaling Map. ]
		{\resizebox{5.3cm}{5.3cm}
			{\includegraphics{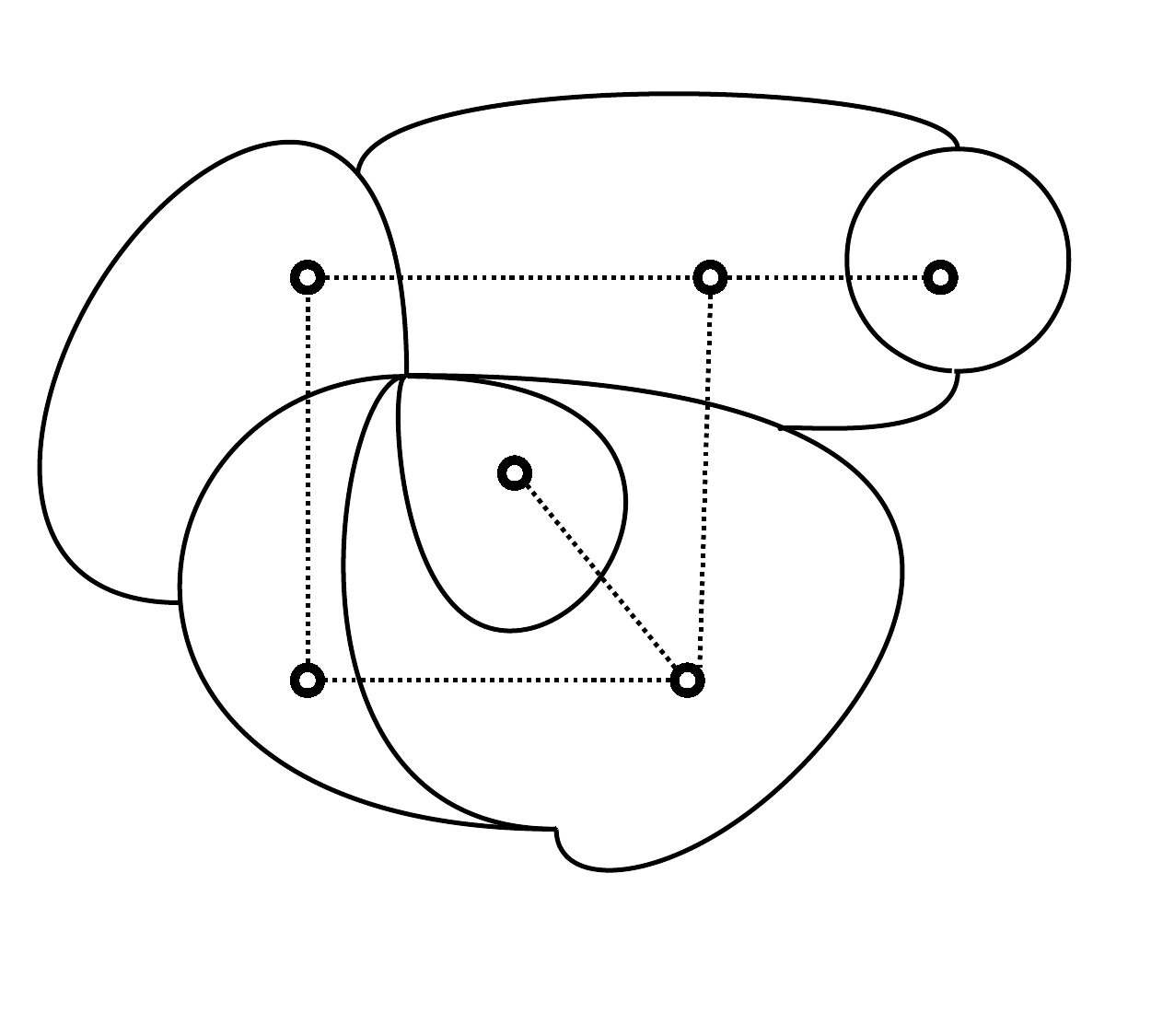} } }
	\end{subfigure} 
	\qquad
	~ 
	\begin{subfigure}[Tubular $\epsilon$-Map.]
		{\resizebox{5.3cm}{5.3cm}
			{\includegraphics{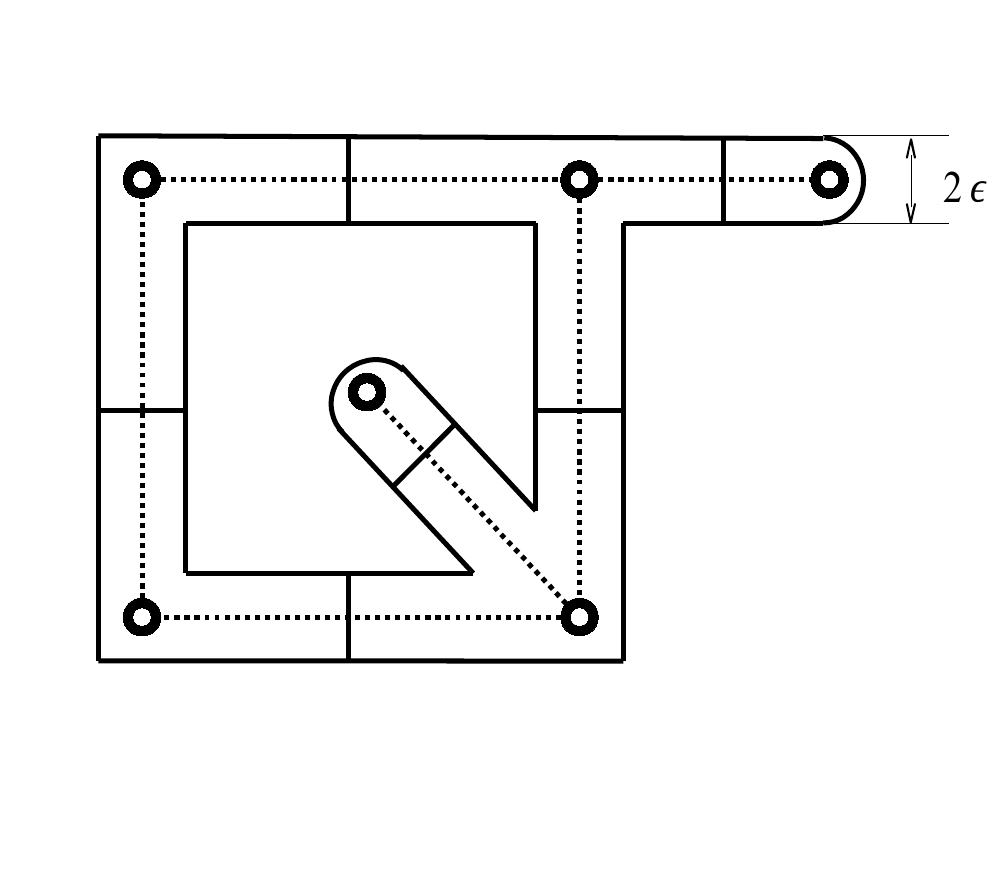} } }                
	\end{subfigure} 
	%
	%
	%
	\caption{Figure (a) depicts a downscaling map example for a given plane graph represented in dotted line. Figure (b) depicts a tubular map example for the same given plane graph. \label{Fig Downscaling and Tubular Maps} }
\end{figure}
%
%
%
%
%
\begin{definition}\label{Def existence of tubular domain}
	Let $G = (V, E)$ be a connected graph embedded in either $\R^{2}$ or $\R^{3}$ such that its edges are simple curves. Denote by $\tilde{v}$, $\tilde{e}$ the points and lines representing the vertices and edges of $G$ and $\tilde{G} \defining (\bigcup \{\tilde{v}: v\in V \} , \bigcup \{\tilde{e}: e\in E \} )$. Let $\epsilon> 0$ be such that the collection of balls $\{B(\tilde{v}, \epsilon): v\in V\}$ is pairwise disjoint. Define 
	\begin{enumerate}[(i)]
		\item The \textbf{tubular} $\boldsymbol{\epsilon}$-\textbf{region} 
		\begin{equation}\label{Eq Tubular Domain first stage}
		\U_{G}^{\epsilon} \defining \{x: d(x, \tilde{G}) < \epsilon \} .
		\end{equation}
		\item For each edge $e \in E$ choose a smooth simple curve $e'$, approximating $\tilde{e}$. Denote by $\ell_{e}$ a secant line (or secant plane) through the midpoint of $e'$ and let $C_{e}$ be the connected component of $\U_{G}^{\epsilon}\cap \ell_{e}$ containing such midpoint.

		\item Let $\{C_{e}: e\in E \}$ be as defined above. For each $v\in V$ and $w$ adjacent to $v$, let $\U_{v,w}^{\epsilon}$ be the tubular $\epsilon$-region corresponding to the induced subgraph $G_{v,w} \defining \big(\{v, w\}, vw \big)$. The set $C_{vw}$ divides $\U_{v,w}^{\epsilon}$ into two open regions, one containing $\tilde{v}$ and one containing $\tilde{w}$ denoted by $H(v, w)$ and $H(w, v)$ respectively. Define the \textbf{starred} region of $v$ by
		\begin{equation}\label{Eq Tubular Domain local}
		\U_{v}^{\epsilon} \defining \bigcup_{w\,\in\, V:  \; vw \,\in \, E}H(v, w).
		\end{equation}
		\item  The collection $\{\U_{v}^{\epsilon}: v\in V \}$ is said to be a \textbf{tubular} $\boldsymbol{\epsilon}$-\textbf{map}, or simply a tubular map, of the graph $G$.
	\end{enumerate}
\end{definition}

Now, we introduce the concept of downscaling map.

\begin{definition}\label{Def Maps of Graphs}
Let $G = (V, E)$ be a plane connected graph, we will say that a \textbf{downscaling map} of $G$ is a collection of bounded open sets $\{\O_{v}: v\in V \}$ called \textbf{regions}, such that
\begin{enumerate}[(i)]
\item $v\in \O_{v}$ for all $v\in V$.

\item If $v\neq w$ then $\O_{v}\cap \O_{w} = \emptyset$.

\item $\O_{v}$ is simply connected for all $v\in V$.

\item\label{Char Global Simply Connectedness} The set $\O$ defined by
\begin{equation}\label{Def Downscaling Map Domain}
\O \defining \interior\left(\cl \bigcup_{v \, \in \, V} \O_{v} \right) 
\end{equation}
is simply connected. We define $\O$ as the \textbf{domain} of the downscaling map.

\item Two elements of the collection share non-negligible boundary if and only if the vertices they contain are connected in the graph, i.e. $\vert\partial \O_{v} \cap \partial \O_{w}\vert > 0$ if and only if $vw\in E$.

\item\label{Char Coast Condition} If $v$ is an outer vertex then $\vert\partial \O_{v}\cap \partial \O \vert>0$.

\end{enumerate}
Finally, we will say that the \textbf{regularity} of the map is given by the lowest degree of regularity of its elements. 
\end{definition}
\begin{remark}
	 Notice the following
\begin{enumerate}[(i)]
\item A tubular map satisfies all the conditions of a downscaling map, except possibly for the global simply connectedness condition (Definition \ref{Def Maps of Graphs}\eqref{Char Global Simply Connectedness}).

\item A tubular map of a plane graph defines a downscaling map if and only if the graph 
is a tree. 

\end{enumerate} 
\end{remark}
The next two results are central in proving the existence of a downscaling map for a simple, plane, connected graph. 
The intuitive idea and technique are depicted in \textit{Figure} \ref{Fig Constructing Downscaling Maps} (b).  
\begin{lemma}\label{Th downscaling map, graph no outer trees}
Let $G = (V, E)$ be a connected, simple, plane graph such that no bridges are in the boundary of its outer face. Then, there exists a downscaling map for $G$. Moreover, this existence can be attained for any chosen level of regularity.
\end{lemma}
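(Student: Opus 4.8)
The plan is to start from a tubular $\epsilon$-map of $G$, as provided by Definition \ref{Def existence of tubular domain}, and to repair its only defect. Indeed, by the preceding Remark a tubular map already satisfies conditions (i)--(iii), (v) and (vi) of Definition \ref{Def Maps of Graphs}; the sole obstruction to it being a downscaling map is the global simple connectedness (Definition \ref{Def Maps of Graphs}\eqref{Char Global Simply Connectedness}), which fails precisely because each bounded face of the plane graph $G$ survives as a hole of $\U_{G}^{\epsilon}$. The whole strategy is therefore to enlarge the regions $H(v,w)$ of the tubular map so as to fill in every bounded face, while keeping the remaining five conditions intact, and then to argue that the resulting domain $\O$ has no holes.

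Concretely, I would fix the cross-sections $C_{e}$ (equivalently their midpoints $m_e$) from the tubular construction, and for each bounded face $f$ of $G$ choose an interior base point $b_f$. Subdividing $\overline{f}$ by simple arcs joining $b_f$ to each $m_e$ with $e$ on the boundary walk of $f$ partitions $\overline{f}$ into closed ``wedges'', one adjacent to each incident vertex. I would then define $\O_v$ as the interior of the union of the tubular region around $v$ together with every wedge incident to $v$, so that $\O_v$ is the star of $v$ in this barycentric-type subdivision, thickened by the original half-tubes. The point of retaining the tubular part is that outer vertices keep their contact with the unbounded face, which we never fill.

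Next I would verify the six conditions. Items (i) and (ii) are immediate, since each wedge and each half-tube belongs to a unique vertex and $v\in\O_v$. For \eqref{Char Coast Condition}, an outer vertex still meets the unfilled outer face along a curve of positive length, so $\vert \partial\O_v\cap\partial\O\vert>0$. Condition (iii) holds because $\O_v$ is a finite union of disk-like pieces all sharing an open neighborhood of $v$, hence is simply connected. For the adjacency condition (v) I would check both implications: two regions meet along a positive-length curve only across a shared cross-section $C_{vw}$ or along a shared radial cut inside a common face $f$, and in either case $v,w$ are consecutive on the boundary walk of $f$ and thus joined by an edge; conversely each edge $vw$ yields the shared cross-section $C_{vw}$. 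Non-adjacent regions meet at most at the isolated points $b_f$, which are Lebesgue-null. Finally, filling every bounded face removes all the holes of $\U_{G}^{\epsilon}$, so that $\O=\interior\big(\cl\bigcup_{v\in V}\O_v\big)$ is simply connected, giving \eqref{Char Global Simply Connectedness}.

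The step I expect to be the main obstacle is exactly \eqref{Char Global Simply Connectedness}: making rigorous that the interior-of-closure operation on the filled union introduces no spurious holes or pinch points, and pinning down where the hypothesis enters. This is where ``no bridges on the boundary of the outer face'' is used: a bridge there is traversed twice by the outer boundary walk, producing a tree-like appendage whose vertices cannot be assigned an outward collar without either creating a false adjacency in (v) or disconnecting $\O$; ruling out such appendages guarantees that every outer edge borders a genuine bounded face to be filled, so the subdivision along the exterior is unambiguous and every outer vertex retains access to $\partial\O$. Care is also needed for faces whose boundary walk is not a simple cycle because of interior cut vertices, where the wedge assignment must follow the walk rather than the vertex set. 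The regularity claim is then handled last by a smoothing argument: the arcs $b_f$--$m_e$, the cross-sections $C_e$, and the outward collars may be chosen to be $C^{k}$ (indeed $C^{\infty}$) curves, and rounding the finitely many corners where wedges meet affects none of the measure-theoretic or topological conditions already established, so the construction can be realized at any prescribed level of regularity.
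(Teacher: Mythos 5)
Your construction is sound in outline but it is a genuinely different route from the paper's. You work \emph{primally}: you keep the tubular $\epsilon$-map and repair its single defect by filling each bounded face with a barycentric-type subdivision into wedges, assigning each wedge to the corner vertex it abuts. The paper instead works \emph{dually}: it takes the faces of the plane dual $G^{*}$ as the candidate regions (Theorem \ref{Th plane connected graphs characterization} guarantees each such face contains exactly one vertex of $G$), then performs two surgeries --- trimming the unbounded dual face to a collar $\O_{0}$ around $\cl(f_{0}^{c})$, and deleting a small closed ball around the dual vertex $x_{0}$ of the outer face so that every outer vertex's region reaches $\partial\O$. Your approach buys the coast condition \eqref{Char Coast Condition} for free (the outer face is never filled, so outer vertices keep their tubular access to $\partial\O$), at the price of having to verify the vertex--region correspondence and condition (v) by hand from the combinatorics of face boundary walks; the paper buys that correspondence from the double-dual theorem, at the price of the two ad hoc repairs. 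The two partitions are in fact essentially homeomorphic (a face of $G^{*}$ at $v$ is exactly the star of $v$ in a subdivision with one point per incident face), but the verifications are organized quite differently.

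Two points deserve correction. First, a small but real defect in your definition of $\O_{v}$: a wedge at the corner of $f$ at $v$ along the edge $e=vw$, if taken all the way to the midpoint $m_{e}$, overlaps the half-tube $H(w,v)\subseteq\O_{w}$ near $m_{e}$, violating disjointness (Definition \ref{Def Maps of Graphs}(ii)). You must bound the wedges by the cross-sections $C_{e}$ (equivalently, take the wedges inside $f\setminus\U_{G}^{\epsilon}$) rather than by the midpoints; this is cosmetic but necessary. Second, your account of where the hypothesis ``no bridges on the boundary of the outer face'' enters does not hold up: a pendant tree hanging into the outer face is handled perfectly well by the tubular part alone (the paper itself notes that the tubular map of a tree \emph{is} a downscaling map), so no ``false adjacency'' or disconnection arises, and your construction appears to go through without the hypothesis at all. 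That is not a flaw in the proof of the lemma --- proving more is harmless --- but you should either drop the rationalization or observe that your argument actually subsumes part of Theorem \ref{Th downscaling map general}. In the paper the hypothesis is used for something concrete and different: it is what lets the proof speak of \emph{the cycle} $C$ bounding the outer face, on which the definition of $\O_{0}$ and the placement of $B(x_{0},\delta)$ depend.
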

\begin{proof}
 If $G$ has no edges then it must be a single vertex $v$, thus an open ball centered at $v$ will satisfy the definition of downscaling map. We will henceforth assume that $G$ has at least one edge.
 
 Let $G^{*}$ be the plane dual graph of $G$ drawn with disjoint simple curves as edges. The regions defined by the faces $f^{*}$ of $G^{*}$ are the natural candidate to define a downscaling map of $G$, however they fail because of two reasons. On one hand, according to \textit{Theorem} \ref{Th plane connected graphs characterization} we know that the double dual $G^{**}$ is isomorphic to $G$. In particular, the outer face of $G^{*}$ must contain a vertex of the plane graph $G$. On the other hand, given a face $f^{*}$ of the plane dual containing an outer vertex $v$, it would not necessarily hold that $\vert \partial f^{*} \cap \partial f^{*}_{0} \vert > 0$, where $f_{0}^{*}$ is the outer face of $G^{*}$, as demands the condition \eqref{Char Coast Condition} in \textit{Definition} \ref{Def Maps of Graphs}.

We overcome the first deficiency as follows, let $v_{0}$ be the unique vertex in $G$ contained in the outer face $f_{0}^{*}$ of $G^{*}$ and let $C$ be the cycle in $G$ bounding its outer face $f_{0}$. Clearly $v_{0}$ belongs to $C$, now let $\epsilon>0$ be such that the tubular $\epsilon$-region of the induced graph $C - v_{0}$ is completely contained in $(f_{0}^{*})^{c}$. Define the region 
\begin{equation*}
\O_{0} \defining  f_{0}^{*}\cap \bigcup_{ x \, \in \, (f_{0})^{c} } B(x, \epsilon).
\end{equation*}
Since $f_{0}$ is the outer face, it is clear that $f_{0}^{c}$ and $\bigcup_{ x \, \in \, (f_{0})^{c} } B(x, \epsilon)$ is simply connected. Therefore $\O_{0}$ is open bounded, simply connected and contains $v_{0}$. For the second deficiency, let $x_{0}$ be the point representing the outer face $f_{0}$ in the plane dual $G^{*}$. Now let $\delta > 0 $ be such that $\cl B(x_{0}, \delta)\subset f_{0}$, then the collection 
\begin{equation*}
\{f^{*} - \cl B(x_{0}, \delta): f^{*} \; \text{face of} \; G^{*}\} \cup \{ \O_{0} - \cl B(x_{0}, \delta) \} , 
\end{equation*}
constitutes a downscaling map for the graph $G$.
 
Finally, since the smooth curves are dense in the plane, it is clear that the boundaries of the elements of this downscaling map can be continuously deformed, to define a new downscaling map with any required level of regularity. 
\qed
\end{proof}
\begin{figure}[h] 
        \centering
        \begin{subfigure}[Starting Graph $G$. ]
                {\resizebox{5.3cm}{5.3cm}
{\includegraphics{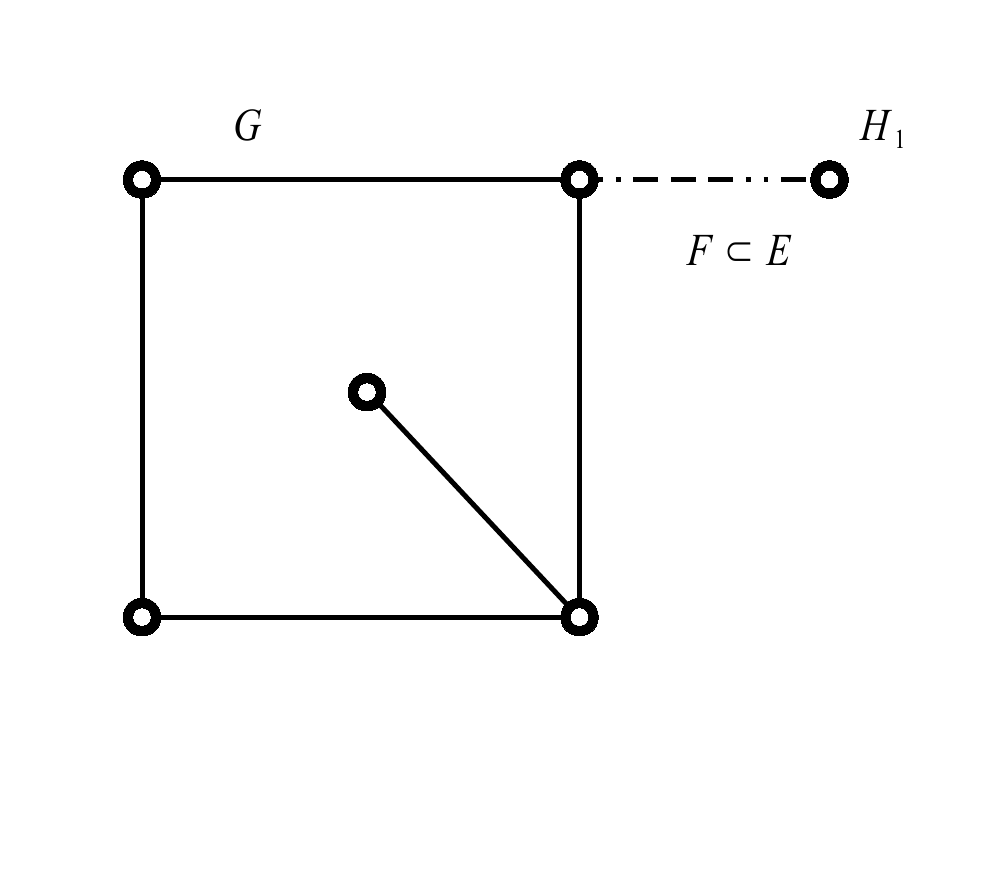} } }
        \end{subfigure} 
        \qquad
        ~ 
          \begin{subfigure}[Dual Graph of $H = G (V,E- F)$.]
                {\resizebox{5.3cm}{5.3cm}
{\includegraphics{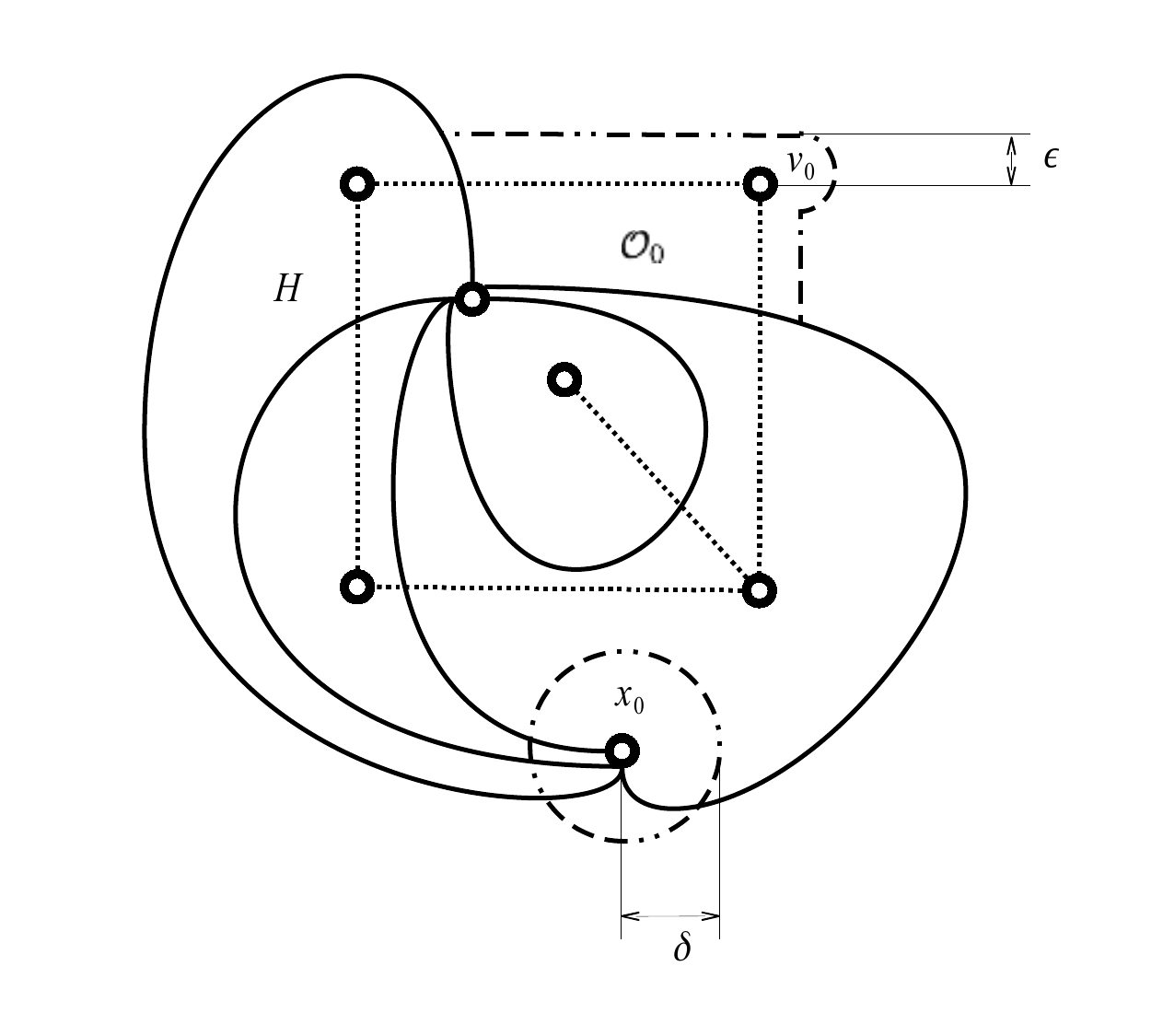} } }                
        \end{subfigure} 
%
%
%
%
\caption{Figure (a) depicts a starting graph $G$ and its set of outer bridges $F$, it also shows in dashed line the tree $H_{1}$ removed from $G$, as in the proof of \textit{Theorem} \ref{Th downscaling map general}. Figure (b) depicts the construction seen in \textit{Lemma} \ref{Th downscaling map, graph no outer trees} for the graph $H$ depicted in dotted line. The construction of the domain $\O_{0}$ and the removal of the ball $B(x_{0}, \delta)$ are depicted in dashed dotted line. \label{Fig Constructing Downscaling Maps} }
\end{figure}
\begin{proposition}\label{Prop. Trees downscaling map}
	Let $G$ be a connected, simple, plane graph, let $F$ be the set of bridges in the outer face of $G$, and $H_{1},...,H_{k}$ be the connected components of the graph defined by removing the edges in $F$. Let $\tilde{G}$ be the graph whose vertex set is $\{v_{1},...,v_{k}\}$, where  $v_{i}$ is connected to $v_{j}$ if and only if $H_{i}$ is connected to $H_{j}$ by an edge in $F$. Then, $\tilde{G}$ is a tree.
\end{proposition}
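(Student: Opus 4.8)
The plan is to establish the two defining properties of a tree from Definition \ref{Def trees and forests graph}: that $\tilde{G}$ is connected and that it is acyclic. The argument is purely combinatorial and rests entirely on the characterization of a bridge: an edge $e$ is a bridge precisely when it lies on no cycle of $G$, equivalently when deleting $e$ separates its two endpoints.

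First I would dispose of connectedness. Since $G$ is connected, any vertex of $H_{i}$ is joined to any vertex of $H_{j}$ by a walk in $G$. Reading this walk off at the level of components---collapsing each maximal sub-walk contained in a single $H_{\ell}$ to the vertex $v_{\ell}$, and each traversed edge of $F$ to the corresponding edge of $\tilde{G}$---produces a walk from $v_{i}$ to $v_{j}$. Hence $\tilde{G}$ is connected.

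Next I would record two structural facts about the edges induced by $F$. Fix $e \in F$ with endpoints $u,w$. Being a bridge, $e$ splits $G - e$ into two components $A \ni u$ and $B \ni w$; deleting the remaining edges of $F$ can only refine this partition, so $u$ and $w$ land in distinct components $H_{i}$, $H_{j}$, and no edge of $F$ creates a loop in $\tilde{G}$. Moreover, two distinct bridges $e, e' \in F$ cannot join the same pair $H_{i}, H_{j}$: otherwise, linking their endpoints through a path inside $H_{i}$ and a path inside $H_{j}$ (both exist, as the $H_{\ell}$ are connected, and both use only edges of $G - F$) would exhibit a closed walk traversing $e$ exactly once, so that deleting $e$ still joins $u$ and $w$, contradicting that $e$ is a bridge. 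Consequently $\tilde{G}$ is simple and its edges correspond bijectively to the bridges in $F$.

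The step I expect to carry the weight is acyclicity, which I would prove by the same lifting idea. Suppose $\tilde{G}$ contained a cycle $v_{i_{0}}, v_{i_{1}}, \ldots, v_{i_{m}} = v_{i_{0}}$, and let $f_{1}, \ldots, f_{m} \in F$ be the distinct bridges corresponding to its edges. Inside each component $H_{i_{\ell}}$ I connect the endpoint of $f_{\ell}$ to the endpoint of $f_{\ell+1}$ by a path using only edges of $G - F$; splicing these paths together with the bridges $f_{1}, \ldots, f_{m}$ yields a closed walk in $G$ that traverses $f_{1}$ exactly once, since the connecting paths avoid $F$ altogether and the $f_{\ell}$ are pairwise distinct. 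Deleting $f_{1}$ leaves a walk in $G - f_{1}$ joining the two endpoints of $f_{1}$, so $f_{1}$ would not be a bridge---a contradiction. Therefore $\tilde{G}$ has no cycle, and being connected and acyclic it is a tree. I note that neither planarity nor the restriction of $F$ to the outer face is used here; the proof needs only that the edges of $F$ are bridges.
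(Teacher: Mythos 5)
Your proof is correct and its core is the same as the paper's: a cycle in $\tilde{G}$ would force some edge of $F$ to lie on a closed walk of $G$, hence not be a bridge. The difference is one of completeness. The paper's proof is a two-line contradiction that only addresses acyclicity and, strictly speaking, argues that removing an edge of the cycle does not disconnect $\tilde{G}$ --- the step identifying this with ``the corresponding edge is not a bridge \emph{of $G$}'' is exactly the lifting argument you spell out (splicing the bridges $f_{1},\dots,f_{m}$ with paths inside the components $H_{i_{\ell}}$ that avoid $F$, so that $f_{1}$ is traversed exactly once). You also verify connectedness of $\tilde{G}$, which is required for it to be a tree and which the paper leaves entirely implicit, and you note correctly that neither planarity nor the restriction of $F$ to the outer face plays any role. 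Nothing in your argument is superfluous or wrong; it is simply the paper's proof with its gaps filled.
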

\begin{proof}
	If $\tilde{G}$ contains a cycle, the removal of one edge in such cycle does not disconnect $\tilde{G}$. Hence such edge is not a bridge contradicting the definition of $F$.
	\qed
\end{proof}
Finally, we present the main result of this section.
\begin{theorem}\label{Th downscaling map general}
Let $G = (V, E)$ be a connected, simple, plane graph. Then, there exists a downscaling map with smooth boundaries for $G$. 
\end{theorem}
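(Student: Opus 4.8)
The strategy is to peel off the outer bridges of $G$ so as to reduce everything to Lemma \ref{Th downscaling map, graph no outer trees}, and then to reassemble the pieces along the tree produced by Proposition \ref{Prop. Trees downscaling map}. Let $F$ be the set of bridges lying on the boundary of the outer face of $G$, and let $H_{1},\dots,H_{k}$ be the connected components obtained after deleting the edges of $F$. I would argue by induction on the number of vertices $\vert V\vert$. If $F=\emptyset$, then $G$ has no bridge on its outer face, so Lemma \ref{Th downscaling map, graph no outer trees} already yields a downscaling map of $G$ of any prescribed regularity, in particular with smooth boundaries; this covers the base of the induction (and in particular the single-vertex graph).

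Assume now $F\neq\emptyset$. Then some outer bridge disconnects $G$, so $k\ge 2$, and by Proposition \ref{Prop. Trees downscaling map} the contracted graph $\tilde{G}$ is a tree with at least one edge; hence it has a leaf $v_{k}$, say corresponding to $H_{k}$. Since two distinct components can be joined by at most one edge of $F$ (a second such edge would lie on a cycle and could not be a bridge), the leaf $H_{k}$ is attached to the remainder of $G$ through a single bridge $e=uw\in F$ with $u\in H_{k}$ and $w\notin H_{k}$. Let $G'\defining G-V(H_{k})$, a connected (the leaf is removed from the tree $\tilde{G}$), simple, plane graph; both $G'$ and $H_{k}$ have strictly fewer vertices than $G$, so by the induction hypothesis each admits a downscaling map with smooth boundaries and simply connected domain, which I denote $\O'$ and $\O_{H_{k}}$ respectively.

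It remains to glue these two maps along the single edge $e$. Because $e$ is an outer bridge of $G$, both of its endpoints lie on the outer face of $G$; deleting exterior material only enlarges the unbounded face, so $w$ is an outer vertex of $G'$ and $u$ is an outer vertex of $H_{k}$. By the coast condition (Definition \ref{Def Maps of Graphs}\eqref{Char Coast Condition}) the regions $\O_{w}$ and $\O_{u}$ therefore meet $\partial\O'$ and $\partial\O_{H_{k}}$ along nondegenerate arcs. I would then place a sufficiently shrunken and suitably rotated copy of the map of $H_{k}$ in the unbounded component of the complement of $\O'$, in a small neighborhood of the exposed arc of $\O_{w}$ and away from every other region, arranging that $\O_{u}$ abuts $\O_{w}$ along a small common sub-arc; a final smoothing of the junction keeps all boundaries smooth.

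Finally I would verify the axioms of Definition \ref{Def Maps of Graphs}. Pairwise disjointness and individual simple connectedness of the regions are inherited from the two maps; the only contact created between the two families is the intended one between $\O_{u}$ and $\O_{w}$, which matches the edge $e$, while inside each family the shared-boundary relations coincide with adjacencies in $G'$ and $H_{k}$, hence in $G$. Attaching the simply connected domain $\O_{H_{k}}$ to the simply connected $\O'$ in the exterior and along a single connected arc encloses no bounded hole, so the resulting domain $\O$ remains simply connected; and since the gluing is localized to a proper sub-arc, every outer vertex of $G$ keeps a portion of its boundary exposed on $\partial\O$, preserving the coast condition. The main obstacle is exactly this geometric reassembly: proving rigorously that the shrunken copy can be positioned so as to create precisely one new contact and no spurious adjacencies, to avoid trapping any bounded hole between the two pieces, and to leave every outer vertex of $G$ on the boundary of the final domain.
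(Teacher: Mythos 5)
Your overall strategy coincides with the paper's: both arguments use Proposition \ref{Prop. Trees downscaling map} to organize the components $H_{1},\dots,H_{k}$ obtained by deleting the outer bridges into a tree, peel off a leaf component, invoke Lemma \ref{Th downscaling map, graph no outer trees} at the base of the induction, and glue the two recursively obtained maps along the unique connecting bridge (inducting on $\vert V\vert$ rather than on $k$ is immaterial). The one substantive divergence is the gluing step, and it is the one place where your version, taken literally, breaks: placing a ``sufficiently shrunken and suitably rotated copy'' of the map of $H_{k}$ next to the exposed arc of $\O_{w}$ moves the regions $\O_{x}$, $x\in V(H_{k})$, away from the points $x$ of the given plane embedding, so condition (i) of Definition \ref{Def Maps of Graphs} ($v\in\O_{v}$) is no longer satisfied by the relocated regions; moreover, the exposed arc of $\O_{w}$ on the boundary of the domain of the map of $G'$ need not lie anywhere near where the bridge $uw$ is actually drawn, so the resulting object is at best a downscaling map of a re-embedded copy of $G$. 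The paper avoids this by leaving both sub-maps in place: it deforms their boundaries so that the two domains are at positive distance, chooses a simple curve $C$ joining the two endpoints of the bridge that meets the two domains only at one interior point of each of the relevant coast arcs and stays at positive distance from all the other regions, and then enlarges the two regions containing the bridge's endpoints by the two halves of a tubular $\epsilon$-region of $C$. This in-place corridor creates exactly the one new interface required by the bridge, keeps every vertex inside its own region, traps no bounded hole, and preserves the coast condition \eqref{Char Coast Condition} --- i.e., it resolves precisely the list of obstacles you name at the end. If you replace your relocation step by this corridor construction, the rest of your verification of the axioms goes through as you describe.
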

\begin{proof}
First, if $G$ has no cycles then it is a tree and its tubular map constitutes a downscaling map. Hence, from now on we assume that $G$ has at least one cycle.

Let $H_{1},...,H_{k}$ be the components and $\tilde{G}$ be its associated graph as defined in Proposition \ref{Prop. Trees downscaling map}. We will proceed by induction over $k$. If $k=1$, then Lemma \ref{Th downscaling map, graph no outer trees} provides the required downscaling map. If $k>1$, then renumbering the components if necessary, suppose that $H_{k}$ is such that $v_{k}$ in $\tilde{G}$ is a leaf. Let $\Lambda$ be a downscaling map with smooth boundaries for $H_{k}$ and let $\M$ be its domain. Analogously, let $\Theta$ be a downscaling map with smooth boundaries for the graph such that includes $H_{1},...,H_{k-1}$ together with every bridge connecting these components and let $\O$ be its domain. By deforming their boundaries, it can be additionally assumed that $d(\O, \M)>0$, while preserving the smoothness of their boundaries. 

Let $v$ and $u$ be the vertices such that $vu$ is the unique bridge connecting $H_{k}$ with some component $H_{i}$ with $1\leq i < k$. Clearly $v \in H_{k}$ and $u\in H_{i}$. Denote by $\O_{u}$ and $\M_{v}$ the regions of $\Theta$ and $\Lambda$ containing $u$ and $v$ respectively. Consider the graph $\hat{G}$ consisting of the vertices $u$, $v$ and the edge $uv$. Take two points $p$ and $q$ in the interior of the one dimensional manifolds $\partial\O\cap\partial\O_{u}$ and $\partial \M\cap\partial \M_{v}$ respectively. Choose a simple curve $C$ connecting $u$ and $v$, which intersects $\partial\O$ and $\partial \M$ at the unique points $p$, $q$ and such such that $d(C, \O ')$ and $d(C, \M ')$ are positive; where $\O ' \defining \O - \O_{v} $ and $\M'\defining \M - \M_{v}$.
 %
%
%
Choose $\epsilon > 0 $ strictly less than $\mathrm{min} \{d(C,\O '),d(C,\M')\}$ and such that if $\mathcal{G}$ denotes the $\epsilon$-map of $\hat{G}$, then it verifies that $\mathcal{G}\cap \O \subset \O_{u}$ and $\mathcal{G}\cap \M \subset \M_{u}$. Redefine $\O_{u}$,  $\M_{v}$ in order to include the regions in the tubular $\epsilon$-map $\mathcal{G}$, corresponding to $u$ and $v$ respectively. If necessary, deform the boundaries of $\O_{u}$ and $\M_{v}$ to attain the required smoothness and denote the outcome by $O_{u}$ and $M_{v}$ respectively. The collection $\Lambda - \{ \M_{v} \} $, $\Theta - \{ \O_{u} \}$ together with $\{ O_{u} \}$ and $\{ M_{v} \} $ constitutes a downscaling map for $G$ with the required regularity. 
\hfill \qed
\end{proof}
%
%
\section{The Downscaled Bipartite Model}\label{Sec PDE Model}
%
%
%
%
%
%
\subsection{Geometric Setting and Modeling Function Spaces}
%
%
In this section we give the geometric setting for the formulation of the problem.
\begin{definition}\label{Def geometric convention WVF}
Let $\Omega$ be a connected bounded region with smooth boundary, let $\triang = \{K: K\in \triang\}$ be a bipartite map and denote by $\tone = \{L: L \in \tone\}$, $\ttwo = \{M: M\in \ttwo\}$ the  bipartition, or \textbf{bi-coloring}, of the map. 
\begin{enumerate}[(i)]
\item
For each region $K\in \triang$ denote by $\outer$ the outer normal vector to its boundary $\partial K$.

\item
For each region $K\in \triang$ denote by $\n: \partial K \rightarrow \R^{N}$ the ``\textbf{normal}" vector by
\begin{equation}\label{Def normal vector}
\n ( \vec{x} )\defining\begin{cases}
\outer( \vec{x} ) & K\in \tone ,\\
-\outer  ( \vec{x} ) & K \in \ttwo\; \text{and}\;\vec{x} \in \partial K\cap \Omega \, , \\
\outer  ( \vec{x} ) & K \in \ttwo \, \text{and} \; \vec{x} \in \partial K\cap \partial \Omega \, .
\end{cases}
\end{equation}

\item
Define $\displaystyle \Omeone \defining \bigcup \{L:L\in \tone\}$, $\displaystyle\Ometwo \defining \bigcup \{M:M\in \ttwo\}$ .

\item
Define $\displaystyle\Gamma\defining \bigcup\{\partial K: K\in \triang\} - \partial \Omega$.

\item
Define 
\begin{equation}\label{Def interface network or graph edges}
\begin{split}
\edges \defining \bigcup \{\partial L \cap \partial M: L \in \tone, \; M\in \ttwo\} 
& =
\{\sigma: \sigma \;\text{is the interface of two regions of different type} \} \\
& =
\{\sigma: \sigma \;\text{defines an edge in the graph} \; G  \;\text{of the map} \; \triang\} .
\end{split}
\end{equation}
\end{enumerate}
\end{definition}
\begin{remark} Notice that
	\begin{enumerate} [(i)]
	\item Simple connectedness upon the domain is not required in order to include the tubular map.
	
	\item In agreement with the previous section notice that a bipartite map will always be induced by a bipartite simple graph.
	\end{enumerate}
\end{remark}
In order to successfully associate a well-posed problem we endow the Problem \eqref{Eq porous media strong} with boundary conditions \eqref{Eq Drained Condition decomposed}, \eqref{Eq Non-Flux Condition decomposed}, together with the exchange interface conditions of normal flux balance \eqref{Eq normal stress balance} and normal stress balance \eqref{Eq normal stress balance}. This gives the following strong problem 
%
%
\begin{multicols}{2}
\begin{subequations}\label{Eq porous media strong decomposed}
\begin{equation}\label{Eq Darcy Strong decomposed 0}
a\, \uone + \grad \pone 
+ \g 
= 0\,,
\end{equation}
\begin{equation}\label{Eq conservative strong decomposed 0}
\div \uone =  F
\,\quad \mathrm{in}\; \Omeone \, .
\end{equation}
\begin{equation}\label{Eq Drained Condition decomposed}
\pone = 0  \quad \mathrm{on}\; \partial \Omega_{1}\cap \partial \Omega \, .
\end{equation}
\begin{equation}\label{Eq Darcy Strong decomposed 1}
a \, \utwo + \grad \ptwo 
+ \g 
= 0\,,
\end{equation}
\begin{equation}\label{Eq conservative strong decomposed 1}
\div \utwo = F\,\quad \mathrm{in}\; \Ometwo .
\end{equation}
\begin{equation}\label{Eq Non-Flux Condition decomposed}
\utwo\cdot\n = 0 \quad \mathrm{on}\; \partial \Omega_{2}\cap \partial \Omega  ,
\end{equation}
%
%
%
%
%
%
%
%
\begin{equation}\label{Eq normal flux balance}
\uone \cdot\n - \utwo \cdot\n = 
\beta\, \ptwo + \flux \, , 
\end{equation}
\begin{equation}\label{Eq normal stress balance}
\ptwo - \pone = 
\stress\, \quad\mathrm{on}\;\Gamma \, .
\end{equation}
\end{subequations}
\end{multicols}
\begin{hypothesis}\label{Hyp non null local storage coefficient}
   It will be assumed that the storage exchange and the friction coefficients, $\beta: \Gamma\rightarrow [0, \infty)$, $a: \Omega\rightarrow (0,\infty)$ respectively, verify that $\beta\in L^{\infty}(\Gamma)$, $\Vert \beta \, \ind_{\Gamma} \Vert_{L^{1} (\Gamma) }  > 0$ and $a\in L^{\infty}(\Omega)$, $\Vert \frac{1}{a}\Vert_{L^{\infty}(\Omega)}>0 $.
\end{hypothesis}
%
%
%
%
%
%
%
In order to introduce the modeling spaces used in the formulation we first notice that $\{L: L\in \tone \}$ and $\{M: M\in \ttwo \}$ are the simply connected components of $\Omega_{1}$ and $\Omega_{2}$ respectively. Then, 
\begin{align*} 
& \Hdiv(\Omega_{1}) = \bigoplus_{L \,\in \, \tone} \Hdiv(L)  , &
& H^{1}(\Omega_{2}) = \bigoplus_{M \,\in \, \ttwo} H^{1}(M) .
\end{align*}
%
%
%
The following space is introduced in order to couple adequately, the action of the pressures traces in the variational formulation
\begin{equation}\label{Eq Decoupling Trace Statement}
\begin{split}
E(\Omega_{2})  \defining & \big\{ q\in H^{1}(\Omega_{2}): 
q\ind_{\partial M \cap \partial L }\in H^{1/2}(\partial L ) \; \text{for all }\, 
(L, M) \in\tone\times\ttwo\big\}\\
=  & \big\{ q\in H^{1}(\Omega_{2}): 
q\ind_{\Gamma }\in H^{1/2}(\Gamma ) \} .
\end{split} 
\end{equation}
We endow $E(\Omega_{2})$ with the $H^{1}(\Omega_{2})$ inner product. It is direct to see that $E(\Omega_{2})$ is a closed subspace of $H^{1}(\Omega_{2})$ and consequently a Hilbert space. Now define
\begin{equation}\label{Def Integrals on the trace}
\V(\Omega_{2}) \defining  \big\{ \v\in \L^{\! 2}(\Omega_{2}): \vtwo = \grad \qtwo\;
\text{for some}\; \qtwo\in E(\Omega_{2}) \big\} 
= \grad (E(\Omega_{2}) ) ,
\end{equation}
endowed with the $\L^{2}(\Omega_{2})$ inner product. Next we show a necessary result.
\begin{lemma}\label{Th Completeness of V(Omega_2)}
Let $E(\Omega_{2})$ and $\V(\Omega_{2})$ be as defined in \eqref{Eq Decoupling Trace Statement},  \eqref{Def Integrals on the trace} respectively, and define 
\begin{equation}\label{Def Space Generating with Isomorphism}
E_{0} (\Omega_{2})\defining \Big\{ \qtwo \in E(\Omega_{2}): \int_{\Omega_{2}}\qtwo = 0 \Big\} .
\end{equation}
Then,
\begin{enumerate}[(i)]
\item There exists a constant $C>0$ depending only on the domain $\Omega_{2}$ such that
\begin{align}\label{Ineq Control on Space Generating with Isomorphism}
& \Vert \rtwo \Vert_{1, \Omega_{2}} \leq C \Vert \grad \rtwo \Vert_{0, \Omega_{2}} , &
& \text{for all }\; \rtwo\in H\, .
\end{align}

\item The space $\V(\Omega_{2})$ is Hilbert.
\end{enumerate}
\end{lemma}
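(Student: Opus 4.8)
The plan is to prove (i) by a compactness–contradiction argument of Poincar\'e--Wirtinger type, and then to obtain (ii) from (i) through a bounded-inverse argument; throughout I read $H$ as the space $E_0(\Omega_2)$ just defined.

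First I would argue (i) by contradiction. Suppose the estimate fails; then there is a sequence $(r_n)\subset E_0(\Omega_2)$, normalized so that $\Vert r_n\Vert_{1,\Omega_2}=1$, with $\Vert\grad r_n\Vert_{0,\Omega_2}\to 0$. Since $\Omega_2$ is a finite union of bounded regions with smooth boundary, the embedding $H^1(\Omega_2)\hookrightarrow L^2(\Omega_2)$ is compact (Rellich--Kondrachov), so a subsequence converges in $L^2(\Omega_2)$ to some $r$. As the gradients tend to $0$, this subsequence is in fact Cauchy in $H^1(\Omega_2)$, whence $r_n\to r$ in $H^1$ with $\grad r=0$, $\Vert r\Vert_{1,\Omega_2}=1$ and $\int_{\Omega_2}r=0$; moreover $r\in E_0(\Omega_2)$ because that space is closed.

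The crucial step, and the main obstacle, is that $\Omega_2=\bigcup\{M:M\in\ttwo\}$ is \emph{disconnected}, so $\grad r=0$ only forces $r$ to equal some constant $c_M$ on each component $M$, not to be globally constant. This is exactly where the defining constraint of $E(\Omega_2)$ is used: since $r\in E(\Omega_2)$, the trace $r\ind_\Gamma$ lies in $H^{1/2}(\Gamma)$, and on each interface $\sigma=\partial L\cap\partial M\subset\Gamma$ this trace equals $c_M$, so $r\ind_\Gamma$ is piecewise constant on $\Gamma$. I would then invoke the fact that a piecewise-constant function on a connected one-codimensional manifold belongs to $H^{1/2}$ only if it has no jumps, since the Gagliardo seminorm of a genuine jump diverges. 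Because the underlying graph $G$ is connected, the interface set $\Gamma$ is connected (interfaces surrounding a common region $L$ share junctions, and graph-connectivity propagates this adjacency throughout $\Omega_2$), so all the constants $c_M$ must coincide. Thus $r$ is a single global constant, and $\int_{\Omega_2}r=0$ forces $r\equiv 0$, contradicting $\Vert r\Vert_{1,\Omega_2}=1$ and proving (i).

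For (ii) I would show that $\V(\Omega_2)$ is complete. The space $E(\Omega_2)$ is closed in $H^1(\Omega_2)$, and $E_0(\Omega_2)$, being the kernel of the continuous functional $q\mapsto\int_{\Omega_2}q$ on $E(\Omega_2)$, is a Hilbert space. The constant-forcing argument above also shows that the kernel of $\grad$ on $E(\Omega_2)$ consists solely of global constants, so $\grad$ restricts to a bijection of $E_0(\Omega_2)$ onto $\V(\Omega_2)$. Now let $(\v_n)$ be Cauchy in $\V(\Omega_2)\subset\L^2(\Omega_2)$ and write $\v_n=\grad r_n$ with $r_n\in E_0(\Omega_2)$. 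Estimate (i) gives $\Vert r_n-r_m\Vert_{1,\Omega_2}\le C\,\Vert\grad(r_n-r_m)\Vert_{0,\Omega_2}=C\,\Vert\v_n-\v_m\Vert_{0,\Omega_2}$, so $(r_n)$ is Cauchy in the complete space $E_0(\Omega_2)$ and converges to some $r\in E_0(\Omega_2)$. Continuity of $\grad$ then yields $\v_n\to\grad r\in\V(\Omega_2)$ in $\L^2(\Omega_2)$, so $\V(\Omega_2)$ is closed in $\L^2(\Omega_2)$ and hence Hilbert.
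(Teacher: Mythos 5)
Your proof is correct and follows essentially the same route as the paper's: part (i) is the Rellich--Kondrachov equivalence-of-norms argument (which the paper states in one line and you unfold as the standard compactness--contradiction), and part (ii) lifts a Cauchy sequence of gradients back to $E_{0}(\Omega_{2})$ via estimate (i) and the continuity of $\bm{\nabla}$, exactly as the paper does. Your only substantive addition is the justification that the component constants $c_{M}$ must coincide --- the no-jumps-in-$H^{1/2}$ argument combined with connectedness of the interface network $\Gamma$ --- which is precisely the content the paper compresses into the phrase ``matching property of traces''; be aware that this is the one step that genuinely uses the global $H^{1/2}(\Gamma)$ constraint in the definition of $E(\Omega_{2})$ and a connectivity property of $\Gamma$ (not merely of the graph $G$) that neither you nor the paper actually verifies for the maps constructed in Section 3.
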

\begin{proof} 
\begin{enumerate}[(i)]
\item Clearly $E_{0} (\Omega_{2})$ is closed and because of the matching property of traces for elements of $E(\Omega_{2})$ the application $\rtwo\mapsto \Vert \grad \rtwo \Vert_{0, \Omega_{2}}$ is norm in $E_{0} (\Omega_{2})$. Due to the Rellich-Kondrachov Theorem this norm is equivalent to the standard one in $E_{0} (\Omega_{2})$ i.e., there exists $C  > 0$ depending only on the domain $\Omega_{2}$ satisfying the statement \eqref{Ineq Control on Space Generating with Isomorphism}.

\item Clearly, it is only necessary to check that $\V(\Omega_{2})$ is complete. Let $\{\vtwo^{n}: n\in \N \}$ be a Cauchy sequence in $\V(\Omega_{2})$, then there exists a sequence $\{\ptwo^{n}: n\in \N\}$ in $E(\Omega_{2})$ such that $\grad \qtwo^{n} = \vtwo^{n}$, then, the function $\rtwo^{n} \defining \qtwo^{n} - \frac{1}{\vert \Omega_{2}\vert}\int_{\Omega_{2}} \qtwo^{n}$  belongs to $E_{0} (\Omega_{2})$ and $\grad \rtwo = \vtwo$. Due to the previous part it follows that the sequence $\{ \rtwo^{n}: n\in \N\} \subseteq E_{0} (\Omega_{2})$ is Cauchy, consequently it converges to an element $\rtwo\in E_{0} (\Omega_{2})\subseteq E(\Omega_{2})$. Finally, since the gradient map $\grad$ from $E(\Omega_{2})$ onto $\V(\Omega_{2})$ is continuous, the result follows. 
\qed
\end{enumerate}
\end{proof}
Now we are ready to introduce the functional setting of the problem, we have
\begin{subequations}\label{Def function spaces}
\begin{equation}\label{Def spaces of velocities}
\X  \defining 
\Hdiv(\Omega_{1}) \times E(\Omega_{2}) ,
\end{equation}
\begin{equation}\label{Def spaces of pressures}
\Y \defining 
\V(\Omega_{2})\times L^{2}(\Omega_{1}) .
\end{equation}
Endowed with their natural inner product and norms
\begin{equation}\label{Def norm space of velocities}
\big\Vert [\vone, \qtwo] \big\Vert_{\X}\defining
\big\{\Vert \vone \Vert_{\Hdiv(\Omega_{1})}^{2} 
+ \Vert \qtwo\Vert_{H^{1}(\Omega_{2})}^{2} \big\}^{\tfrac{1}{2}} , 
\end{equation}
\begin{equation}\label{Def norm space of pressures}
\big\Vert [\vtwo, \qone] \big\Vert_{\Y}\defining 
\big\{\Vert \vtwo\Vert_{\L^{2}(\Omega_{2})}^{2} 
+ \Vert  \qone \Vert_{ L^{2}(\Omega_{1})}^{2}\big\}^{\tfrac{1}{2}} .
\end{equation}
\end{subequations}
%
%
%
%
\begin{remark}\label{Rem notation of the duality product}
\begin{enumerate}[(i)]
\item Notice that the definition of spaces gathers the functions of high regularity in $\X$ and the functions of low regularity in $\Y$. This choice is made on one hand to satisfy the hypotheses of Theorem \ref{Th well posedeness mixed formulation classic} and, on the other hand, to preserve the remarkable aspect that the underlying modeling spaces $\X$ and $\Y$ are free of coupling conditions. This approach will lead to a version of mixed formulation different from the one presented in \cite{MoralesShow2} and \cite{Morales2}, which shares the coupling-free spaces feature. 

\hspace{-0.3in}In order to avoid heavy notation, in the sequel we adopt the following conventions
\item Let $\Delta$ be an open bounded set, $\v\in \Hdiv(\Delta)$ and $q\in H^{1}(\Delta)$, then we denote
\begin{equation}
\int_{\partial \Delta} \big(\v\cdot \n\big) \, q\; dS \defining 
   \big\langle \v \cdot\n, q
      \big\rangle_{H^{-1/2}(\partial \Delta), H^{-1/2}(\partial \Delta)} .
\end{equation}
%
%
%
\item Since $\Gamma = \bigcup_{\sigma\,\in \,\E} \sigma$ we denote
\begin{equation}\label{Eq Integrals on the trace}
\int_{\Gamma} \left(\vone\cdot\n\right) \qtwo\, dS \defining 
\sum_{\sigma\, \in \, \edges} \int_{\sigma} \left(\vone\cdot\n\right) \qtwo\, dS .
\end{equation}
%
\end{enumerate}
\end{remark}
%
%
\subsection{Weak Formulation of and Well-Posedness of the Problem}
%
%
In this section we present a particular mixed-mixed formulation for the problem \eqref{Eq porous media strong decomposed}.
\begin{subequations}\label{Pblm weak continuous solution}
%
%
\begin{multline}\label{Pblm weak continuous solution 1}
\text{Find}\;
\big([\uone, \ptwo],[\utwo, \pone]\big)\in \X\times \Y : \quad
\int_{\Omega_1}  a \, \uone \cdot \vone 
+ \int_{\Gamma}  \beta \, \ptwo \, \qtwo \, dS 
- \int_{\,\Gamma}\left(\uone\cdot\n\right) \qtwo\, d S
+\int_{\,\Gamma}\ptwo \left(\vone\cdot\n\right)\, d S \\
- \int_{\Omega_1} \pone \,\div\vone\,
- \int_{\Omega_{2}} \utwo \cdot \grad \qtwo\, 
= \int_{\Omega_{2}}F\, \qtwo - \int_{\Omega_{1}} \g \cdot \vone
+ \int_{\Gamma} \stress\, (\vone\cdot\n)\, dS 
- \int_{\Gamma} \flux\, \qtwo\, dS ,
\end{multline}
\begin{equation}\label{Pblm weak continuous solution 2}
\int_{\Omega_1}\div\uone\, \qone  
+ \int_{\Omega_2} \grad \ptwo \cdot\vtwo 
+ \int_{\Omega_2}  a \, \utwo \cdot \vtwo\,
\\
= \int_{\Omega_{1}}F\, \qone 
- \int_{\Omega_{2}} \g \cdot \vtwo
\quad \text{ for all } 
\big([\vone, \qtwo],[\vtwo, \qone]\big)\in \X\times \Y .
\end{equation}
%
%
\end{subequations}
Define the operators $\A: \X\rightarrow \X '$, $\B:\X\rightarrow \Y\,'$ and $\C:\Y\rightarrow \Y\,'$ by
\begin{subequations}\label{Def Operators Weak Continuous}
\begin{equation}\label{Def Regular Actions Operator }
\A [\vone, \qtwo],\big([\wone, \rtwo]\big)
\defining  
\int_{\Omega_1}  a \, \vone \cdot \wone 
+ \int_{\Gamma}  \beta \, \qtwo \, \rtwo \, dS \\
- \int_{\,\Gamma}\left(\vone\cdot\n\right) \rtwo\, d S
+\int_{\,\Gamma}\qtwo \left(\wone\cdot\n\right)\, d S ,
\end{equation}
\begin{equation}\label{Def Mixed Operator Continuous}
\B[\vone,\,\qtwo], \big([\wtwo,\,\rone]\big)\defining 
 \int_{\Omega_1}  \div\vone \, \rone
+ \int_{\Omega_{2}} \grad\qtwo \cdot \wtwo ,\,
\end{equation}
\begin{equation}\label{Def Non Regular Actions non-negative Operator}
\C [\vtwo,\,\qone] \big([\wtwo,\,\rone]\big)\defining  \int_{\Omega_2} a \, \vtwo \cdot \wtwo .
\end{equation}
\end{subequations}
Thus, the Problem \eqref{Pblm weak continuous solution} is equivalent to
\begin{equation}\label{Pblm operators weak continuous solution}
%
%
\begin{split}
\text{Find a pair}\; 
\big([\utwo,\,\pone], [\uone,\,\ptwo]\big)\in \X\times \Y: \quad 
\A[\utwo,\,\pone] + \B ' [\uone,\,\ptwo]  = F_{1}\quad \text{in}\; \X ' ,\\
%
- \B [\utwo,\,\pone]  + \C [\uone,\,\ptwo] = F_{2} \quad \text{in}\; \Y ' .
\end{split}
\end{equation}
Where $F_{1}\in \X '$ and $F_{2} \in \Y '$ are the functionals defined by the right hand side of \eqref{Pblm weak continuous solution 1} and \eqref{Pblm weak continuous solution 2} respectively. 
%
%
%
%
%
%
\subsubsection{Inf-Sup Condition of the Operator $\B$ and Coerciveness
of the Operator $\A$ on $\X\cap \ker (\B)$}
%
%
\begin{lemma}\label{Th inf sup condition}
The operator $\B: \X \rightarrow \Y '$ defined in equation \eqref{Def Mixed Operator Continuous} is continuous and satisfies the $\inf-\sup$ condition i.e., there exists a constant $C>0$ depending only on the map $\triang$ such that for every $[\wtwo, \rone]\in\Y $ there exists $[\vone, \qtwo]\in\X$ satisfying
\begin{equation}\label{Ineq existence of beta}
 \B\,[\vone, \qtwo] ([\wtwo, \rone])\geq 
C\, \big\Vert [\vone, \qtwo]\big\Vert_{\X} \big\Vert [\wtwo, \rone] \big\Vert_{\Y}
\, .
\end{equation}
Moreover, the constant $C>0$ is independent from $[\wtwo, \rone]$.
\end{lemma}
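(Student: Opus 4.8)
The structure of $\B$ is block-diagonal: the first summand couples $\div\vone$ only to $\rone\in L^{2}(\Omeone)$ and the second couples $\grad\qtwo$ only to $\wtwo\in\V(\Ometwo)$. Continuity is immediate from Cauchy--Schwarz, since $\vert\B[\vone,\qtwo]([\wtwo,\rone])\vert \le \Vert\div\vone\Vert_{0,\Omeone}\Vert\rone\Vert_{0,\Omeone} + \Vert\grad\qtwo\Vert_{0,\Ometwo}\Vert\wtwo\Vert_{0,\Ometwo} \le \Vert[\vone,\qtwo]\Vert_{\X}\,\Vert[\wtwo,\rone]\Vert_{\Y}$. For the inf-sup bound the plan is, given an arbitrary $[\wtwo,\rone]\in\Y$, to build a single test pair $[\vone,\qtwo]\in\X$ that simultaneously saturates both summands, treating the two blocks independently.

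For the first block I would produce a bounded right inverse of the divergence on $\Omeone$. Recall that $\Hdiv(\Omeone)=\bigoplus_{L\in\tone}\Hdiv(L)$, so it suffices to work region by region. On each simply connected smooth $L$, solving the homogeneous Dirichlet problem $-\Delta\phi = \rone$ in $L$, $\phi=0$ on $\partial L$ (by Lax--Milgram, using the Poincaré inequality on $H^{1}_{0}(L)$) yields $\phi\in H^{1}_{0}(L)$ with $\Vert\grad\phi\Vert_{0,L}\le C_{L}\Vert\rone\Vert_{0,L}$. Setting $\vone\defining-\grad\phi$ on each $L$ gives $\vone\in\Hdiv(\Omeone)$ with $\div\vone=\rone$ in the distributional sense and $\Vert\vone\Vert_{\Hdiv(\Omeone)}\le C_{1}\Vert\rone\Vert_{0,\Omeone}$, where $C_{1}$ depends only on the geometry of the regions in $\tone$. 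Consequently $\int_{\Omeone}\div\vone\,\rone=\Vert\rone\Vert_{0,\Omeone}^{2}$.

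For the second block I would use that $\V(\Ometwo)=\grad(E(\Ometwo))$ by the very definition \eqref{Def Integrals on the trace}. Hence there is $q\in E(\Ometwo)$ with $\grad q=\wtwo$; replacing $q$ by its zero-mean representative $\qtwo\defining q-\tfrac{1}{\vert\Ometwo\vert}\int_{\Ometwo}q\in E_{0}(\Ometwo)$ leaves the gradient unchanged, so $\grad\qtwo=\wtwo$ and $\int_{\Ometwo}\grad\qtwo\cdot\wtwo=\Vert\wtwo\Vert_{0,\Ometwo}^{2}$. Lemma \ref{Th Completeness of V(Omega_2)}(i), i.e. inequality \eqref{Ineq Control on Space Generating with Isomorphism}, then gives $\Vert\qtwo\Vert_{1,\Ometwo}\le C\Vert\grad\qtwo\Vert_{0,\Ometwo}=C\Vert\wtwo\Vert_{0,\Ometwo}$ with $C$ depending only on $\Ometwo$.

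Assembling the two blocks, the test pair $[\vone,\qtwo]$ satisfies $\B[\vone,\qtwo]([\wtwo,\rone])=\Vert\rone\Vert_{0,\Omeone}^{2}+\Vert\wtwo\Vert_{0,\Ometwo}^{2}=\Vert[\wtwo,\rone]\Vert_{\Y}^{2}$, while the two norm estimates give $\Vert[\vone,\qtwo]\Vert_{\X}\le \widetilde{C}\,\Vert[\wtwo,\rone]\Vert_{\Y}$ with $\widetilde{C}=\max\{C_{1},C\}$. Dividing yields $\B[\vone,\qtwo]([\wtwo,\rone])\ge \widetilde{C}^{-1}\Vert[\vone,\qtwo]\Vert_{\X}\Vert[\wtwo,\rone]\Vert_{\Y}$, which is exactly \eqref{Ineq existence of beta} with a constant determined solely by the map $\triang$ and independent of $[\wtwo,\rone]$. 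The only non-bookkeeping ingredient is the bounded right inverse of the divergence; this is classical for smooth (or Lipschitz) domains, and the block-diagonal form of $\B$ is precisely what lets the two controls combine without interference, so I expect no genuine obstacle beyond checking that the constants are uniform over the finitely many regions of the map.
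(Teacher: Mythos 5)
Your proposal is correct and follows essentially the same route as the paper's proof: a local Dirichlet problem on each region $L\in\tone$ to invert the divergence, and the zero-mean representative in $E_{0}(\Omega_{2})$ together with inequality \eqref{Ineq Control on Space Generating with Isomorphism} to handle the gradient block. The only (welcome) difference is your choice $\vone=-\grad\phi$, which fixes a sign slip in the paper's $\v_{L}\defining\grad\xi_{L}$ so that $\div\vone=\rone$ and the pairing really equals $\Vert[\wtwo,\rone]\Vert_{\Y}^{2}$.
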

\begin{proof}
It is direct to see that the operator $\B$ is continuous. Now fix $[\wtwo, \rone]\;\in \Y$, for each polygon $L\in \tone$ let $\xi_{L} \in H_{0}^{1}(L)$ be the unique solution of the local homogeneous Dirichlet problem
\begin{align}
   & - \div \grad \xi_{L} = \rone \, \ind_{L}\quad \text{in}\; L ,& 
   & \xi_{L} = 0 \quad \text{on}\; \partial L .
\end{align}
Taking $\v_{L} \defining \grad \xi_{L}$ due to Poincar\'e inequality we observe that $\Vert  \v_{L} \Vert_{ \Hdiv(L) } \leq C_{L} \, \Vert  \rone \ind_{L} \Vert_{0, L}$ where $C_{L}$ depends only on the diameter of the simply connected region $L$. Therefore, the function $\vone \defining \sum_{L\, \in\, \tone} \v_{L}\, \ind_{L}$ clearly belongs to $\Hdiv(\Omega_{1})$ and $\Vert  \vone \Vert_{ \Hdiv(\Omega_{1}) } \leq \big(\max_{L\, \in \, \tone}  C_{L} \big)\, \Vert  \rone  \Vert_{0, \Omega_{1}}$. 

Let $\wtwo\in \V(\Omega_{2})$, by definition there must exist $\eta\in E(\Omega_{2})$ such that $\grad \eta = \vtwo$. Then, $\qtwo \defining \eta - \frac{1}{\vert \Omega_{2}\vert}\int_{\Omega_{2}} \eta$ belongs to the space $E_{0} (\Omega_{2})$ (defined in \eqref{Def Space Generating with Isomorphism}), it satisfies that $\grad \rtwo = \wtwo$ and due to the Inequality \eqref{Ineq Control on Space Generating with Isomorphism}, it holds that $\Vert \qtwo \Vert_{1, \Omega_{2}} \leq C \, \Vert \vtwo\Vert_{0, \Omega_{2}}$ with $C > 0$ depending only on the domain $\Omega_{2}$.

Hence, the pair $[\vone, \qtwo]$ belongs to $\X$ and $C\, \big\Vert [\vone, \qtwo] \big\Vert_{\X} \leq \big\Vert [\wtwo, \rone] \big\Vert_{\Y} $ for an adequate constant $C > 0$ independent from $[\wtwo, \rone]$. Moreover,
\begin{equation*}
   \B [\vone, \qtwo]\big([\wtwo, \rone]\big) = \big\Vert [\wtwo, \rone] \big\Vert_{\Y}^{2} 
   \geq C \, \big\Vert [\vone, \qtwo] \big\Vert_{\X} \, 
   \big\Vert [\wtwo, \rone] \big\Vert_{\Y} \,  .
\end{equation*}
The inequality above yields the inf-sup condition \eqref{Ineq existence of beta}. 
\qed
\end{proof}
\begin{proposition}\label{Th coercivity of A on ker B}
The operator $\A:\X\rightarrow \X\,'$ defined by \eqref{Def Regular Actions Operator } is $\X$-coercive on $\X\cap \ker (\B)$ i.e., 
\begin{equation}\label{Ineq charaterization of alpha}
\A[\vone, \qtwo] \big([\vone, \qtwo]\big)\geq  C
\big\Vert[\vone, \qtwo] \big\Vert_{\X}^{2} \,,\quad 
\text{for all}\; [\vone, \qtwo]\in \X\cap \ker (\B) .
\end{equation}
Where $C>0$ is an adequate constant depending only on the map $\triang$ and the storage coefficient $\beta$. 
\end{proposition}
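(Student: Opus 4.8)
The plan is to exploit the special algebraic structure of $\A$ evaluated on the diagonal, together with a sharp description of $\X\cap\ker(\B)$. First I would compute $\A[\vone,\qtwo]\big([\vone,\qtwo]\big)$ directly from the definition \eqref{Def Regular Actions Operator }. The crucial observation is that the two boundary terms $-\int_{\Gamma}(\vone\cdot\n)\,\qtwo\,dS$ and $+\int_{\Gamma}\qtwo\,(\vone\cdot\n)\,dS$ are identical in absolute value and opposite in sign, so they cancel exactly, leaving
\begin{equation*}
\A[\vone,\qtwo]\big([\vone,\qtwo]\big) = \int_{\Omega_{1}} a\,|\vone|^{2} + \int_{\Gamma} \beta\,|\qtwo|^{2}\,dS .
\end{equation*}
Since the right-hand side is manifestly nonnegative and built only from $a$, $\beta$ and the interface geometry, this identity is what the whole estimate rests upon and already fixes the nature of the coercivity constant.

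Next I would characterize $\X\cap\ker(\B)$ using \eqref{Def Mixed Operator Continuous}. Testing with $\wtwo=0$ and $\rone=\div\vone\in L^{2}(\Omega_{1})$ forces $\div\vone=0$; testing with $\rone=0$ and $\wtwo=\grad\qtwo$, which is admissible because $\qtwo\in E(\Omega_{2})$ entails $\grad\qtwo\in\V(\Omega_{2})=\grad(E(\Omega_{2}))$, forces $\Vert\grad\qtwo\Vert_{0,\Omega_{2}}=0$, hence $\grad\qtwo=0$. Consequently, on $\ker(\B)$ both graph norms collapse to their pure $L^{2}$ parts, namely $\Vert\vone\Vert_{\Hdiv(\Omega_{1})}=\Vert\vone\Vert_{0,\Omega_{1}}$ and $\Vert\qtwo\Vert_{H^{1}(\Omega_{2})}=\Vert\qtwo\Vert_{0,\Omega_{2}}$, so it suffices to bound $\int_{\Omega_{1}}a|\vone|^{2}+\int_{\Gamma}\beta|\qtwo|^{2}\,dS$ below by a multiple of $\Vert\vone\Vert_{0,\Omega_{1}}^{2}+\Vert\qtwo\Vert_{0,\Omega_{2}}^{2}$.

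The velocity term is immediate: since Hypothesis \ref{Hyp non null local storage coefficient} guarantees $1/a\in L^{\infty}(\Omega)$, the coefficient $a$ is bounded below away from zero and $\int_{\Omega_{1}}a|\vone|^{2}\geq \Vert 1/a\Vert_{L^{\infty}(\Omega)}^{-1}\,\Vert\vone\Vert_{0,\Omega_{1}}^{2}$. The pressure term is where the main obstacle lies, because Hypothesis \ref{Hyp non null local storage coefficient} supplies only the \emph{global} positivity $\Vert\beta\,\ind_{\Gamma}\Vert_{L^{1}(\Gamma)}>0$, which by itself cannot control a $\qtwo$ that is an arbitrary, independently chosen constant on each component $M\in\ttwo$. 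The resolution is that $\grad\qtwo=0$ in $E(\Omega_{2})$ yields much more than piecewise constancy: by the matching property of traces for elements of $E(\Omega_{2})$ — precisely the property invoked in Lemma \ref{Th Completeness of V(Omega_2)} to make $\rtwo\mapsto\Vert\grad\rtwo\Vert_{0,\Omega_{2}}$ a norm on $E_{0}(\Omega_{2})$ — continuity across the interface network $\Gamma$ together with the connectedness of the bipartite graph $G$ underlying $\triang$ propagates equality of these constants along every alternating path, so all of them coincide. Hence $\qtwo\equiv c$ is a single global constant on $\Omega_{2}$, its trace equals $c$ on all of $\Gamma$, and therefore
\begin{equation*}
\int_{\Gamma}\beta\,|\qtwo|^{2}\,dS = c^{2}\,\Vert\beta\,\ind_{\Gamma}\Vert_{L^{1}(\Gamma)} = \frac{\Vert\beta\,\ind_{\Gamma}\Vert_{L^{1}(\Gamma)}}{|\Omega_{2}|}\,\Vert\qtwo\Vert_{0,\Omega_{2}}^{2},
\end{equation*}
a strictly positive multiple precisely because $\Vert\beta\,\ind_{\Gamma}\Vert_{L^{1}(\Gamma)}>0$. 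Taking $C$ to be the minimum of $\Vert 1/a\Vert_{L^{\infty}(\Omega)}^{-1}$ and $\Vert\beta\,\ind_{\Gamma}\Vert_{L^{1}(\Gamma)}/|\Omega_{2}|$ then gives \eqref{Ineq charaterization of alpha}. I expect the verification of this ``single global constant'' step to be the crux of the argument, since it is exactly the point where bipartiteness, connectedness, and the defining trace condition of $E(\Omega_{2})$ interlock; the remaining computations are routine.
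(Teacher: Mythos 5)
Your proposal is correct and follows essentially the same route as the paper's proof: the cancellation of the two interface terms of $\A$ on the diagonal, the identities $\div\vone=0$ and $\grad\qtwo=0$ obtained by testing $\ker(\B)$ against $[\bm{0},\rone]$ and $[\grad\qtwo,\,0]$, and the lower bounds $\Vert\tfrac{1}{a}\Vert_{L^{\infty}(\Omega)}^{-1}$ for the velocity term and $\vert\Omega_{2}\vert^{-1}\Vert\beta\,\ind_{\Gamma}\Vert_{L^{1}(\Gamma)}$ for the interface term, combined into the same minimum constant. The only difference is that you spell out the passage from ``$\qtwo$ locally constant on each component $M\in\ttwo$'' to ``$\qtwo$ a single global constant'' by appealing to the trace-matching property of $E(\Omega_{2})$ together with the connectedness of the underlying graph --- a step the paper leaves implicit in its proof of this proposition, though it is exactly the property it invokes in Lemma \ref{Th Completeness of V(Omega_2)}.
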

\begin{proof} The continuity of the operator $\A$ follows applying the Cauchy-Schwartz inequality on each of its summands and noticing that the boundary terms involved can be controlled by the norm $\Vert \cdot \Vert_{\X}$. For the coerciveness of the operator, let $[\vone, \qtwo]\in \X\cap \ker(\B)$ then 
   \begin{equation}\label{Def testing the kernel of B}
      \B[\vone, \qtwo]\big([\wtwo, \rone]\big) = 0 \quad \text{for all}\; [\wtwo, \rone]\in \Y .
   \end{equation}
In particular, testing \eqref{Def testing the kernel of B} with $[\bm{0}, \rone]\in \Y$ we conclude that $\div \vone = 0$ since $\rone$ is an arbitrary element in $L^{2}(\Omega_{1})$. On the other hand, clearly $\grad \qtwo \in \V(\Omega_{2})$ and the pair $[\grad \qtwo, 0]\in \Y$ is eligible for testing \eqref{Def testing the kernel of B}. The test yields $\grad\qtwo = \bm{0}$ i.e., $\qtwo$ is constant inside $\Omega_{2}$. Hence 
\begin{equation*}
   \int_{\Gamma} \beta \, \qtwo^{2} = 
   \frac{\Vert \beta \, \ind_{\partial \Gamma}\Vert_{L^{1}(\Gamma)}}{\vert  \Omega_{2} \vert} \, 
   \Vert \qtwo \Vert_{0, \Omega_{2}}^{2} 
   = \frac{\Vert \beta \, \ind_{\Gamma}\Vert_{L^{1}(\Gamma)}}{\vert  \Omega_{2} \vert} \, 
   \Vert \qtwo \ind_{\Omega_{2}} \Vert_{1, \Omega_{2}}^{2} .
\end{equation*}
Using the previous observations we get that
\begin{equation*}
\begin{split}
\A[\vone, \qtwo] \big([\vone, \qtwo]\big) & = 
\int_{\Omega_1}  a\vone \cdot \vone 
+ \int_{\Gamma}  \beta \, \qtwo^{2}  \, dS \\
& \geq
\Big\Vert\frac{1}{a} \Big\Vert^{-1}_{L^{\infty}(\Omega)}\Vert \vone \Vert^{2}_{\Hdiv(\Omega_{1} ) } 
+ \frac{\Vert \beta \, \ind_{\Gamma}\Vert_{L^{1}(\Gamma)}}{\vert  \Omega_{2} \vert} \, 
   \Vert \qtwo  \Vert_{1, \Omega_{2} }^{2} \\
 &  \geq C \, \big\Vert [\vone, \qtwo]\big\Vert_{\X}^{2} \, ,
   \end{split}
\end{equation*}
where $C = \min \big\{\Vert\frac{1}{a}\Vert^{-1}_{L^{\infty}(\Omega)}, \, 
\vert  \Omega_{2} \vert^{-1}\, \Vert \beta \, \ind_{\Gamma}\Vert_{L^{1}(\Gamma)} \,  \big\}$. This completes the proof. 
\qed
\end{proof}
\begin{theorem}\label{Th well-posednes of the weak variational formulation}
The Problem \eqref{Pblm operators weak continuous solution} is well-posed and there exist a constant $C>0$ depending only on the map $\triang$ such that
\begin{equation*}
\big\Vert [\utwo,\,\pone]\big\Vert_{\X} 
+ \big\Vert[\uone,\,\ptwo]\big\Vert_{\Y} 
\leq C\, (\Vert F_{1}\Vert_{\X '} + \Vert F_{2}\Vert_{\Y '}).
\end{equation*}
\end{theorem}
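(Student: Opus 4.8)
The plan is to recognize that Problem \eqref{Pblm operators weak continuous solution} is precisely an instance of the abstract mixed system \eqref{Pblm operators abstrac system}, under the identification $\x = [\utwo, \pone] \in \X$ and $\y = [\uone, \ptwo] \in \Y$, and then to verify the three hypotheses of Theorem \ref{Th well posedeness mixed formulation classic}. Most of the substantive analytic work has already been carried out in the preceding results, so the argument reduces to assembling these pieces together with two elementary algebraic observations.

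First I would dispatch the continuity requirements. Continuity of $\B$ is recorded in Lemma \ref{Th inf sup condition}; continuity of $\A$ follows by applying Cauchy--Schwarz termwise, the boundary pairings on $\Gamma$ being controlled by the $\X$-norm as already noted in the proof of Proposition \ref{Th coercivity of A on ker B}; and continuity of $\C$ is immediate from \eqref{Def Non Regular Actions non-negative Operator} together with $a \in L^{\infty}(\Omega)$ from Hypothesis \ref{Hyp non null local storage coefficient}.

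Next I would verify hypothesis (i). The crucial point is the non-negativity of $\A$: upon testing \eqref{Def Regular Actions Operator } against the diagonal argument $[\vone, \qtwo]$, the two skew boundary terms $-\int_{\Gamma}(\vone\cdot\n)\,\qtwo\, dS$ and $+\int_{\Gamma}\qtwo\,(\vone\cdot\n)\, dS$ cancel exactly, leaving $\A[\vone,\qtwo]([\vone,\qtwo]) = \int_{\Omega_1} a\,\vert\vone\vert^{2} + \int_{\Gamma}\beta\,\qtwo^{2}\, dS \geq 0$, since $a>0$ and $\beta \geq 0$. The $\X$-coercivity of $\A$ on $\ker(\B)$ is exactly Proposition \ref{Th coercivity of A on ker B}. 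Hypothesis (ii), the inf--sup condition for $\B$, is furnished directly by Lemma \ref{Th inf sup condition}. For hypothesis (iii) I would observe from \eqref{Def Non Regular Actions non-negative Operator} that $\C[\vtwo,\qone]([\wtwo,\rone]) = \int_{\Omega_2} a\,\vtwo\cdot\wtwo$ is manifestly symmetric in its two arguments and that its diagonal value $\int_{\Omega_2} a\,\vert\vtwo\vert^{2}$ is non-negative.

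With all three hypotheses in hand, I would invoke Theorem \ref{Th well posedeness mixed formulation classic} to obtain existence and uniqueness of the solution together with the stability estimate \eqref{mix-est}, which is the desired bound. The only genuinely new verification is the cancellation yielding non-negativity of $\A$; there is no real obstacle here, precisely because the delicate analytic ingredients---the inf--sup and coercivity conditions---have already been established separately. Finally, the dependence of the constant $C$ only on the map $\triang$ (and the fixed data $\beta$) is inherited from the coercivity and inf--sup constants of Proposition \ref{Th coercivity of A on ker B} and Lemma \ref{Th inf sup condition}, which share the same dependence.
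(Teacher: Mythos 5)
Your proposal is correct and follows exactly the route the paper takes: the paper's own proof is a one-line invocation of Theorem \ref{Th well posedeness mixed formulation classic}, relying on Lemma \ref{Th inf sup condition} and Proposition \ref{Th coercivity of A on ker B} for the substantive hypotheses. You simply make explicit the remaining routine verifications (continuity of $\A$ and $\C$, the cancellation of the skew boundary terms giving non-negativity of $\A$, and the symmetry and non-negativity of $\C$), which the paper leaves implicit.
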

\begin{proof}
The proof is a direct application of Theorem \ref{Th well posedeness mixed formulation classic} as all the required hypotheses are satisfied. 
\qed
\end{proof}
%
%
%
%
\subsection{Recovering the Strong Problem}
%
%
%
%
We begin this section with the strong problem that is modeled by the weak variational formulation \eqref{Pblm weak continuous solution}. The process shows mild restrictions on the forcing terms which are characterized below.
\begin{theorem}\label{Th weak formulation of continuos version}
The solution of the weak variational problem \eqref{Pblm weak continuous solution} is a strong solution of the problem \eqref{Eq porous media strong decomposed} with the forcing gravitation term $\g$ in the equation \eqref{Eq Darcy Strong decomposed 0} replaced by $P\g$, which denotes its orthogonal projection onto the space $\V(\Omega_{2})$. In particular if $\g\ind_{\Omega_{2}} \in \V(\Omega_{2})$ the weak solution is exactly the strong solution.
\end{theorem}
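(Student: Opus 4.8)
The plan is to read off each line of the strong system \eqref{Eq porous media strong decomposed} from the two weak identities \eqref{Pblm weak continuous solution 1}--\eqref{Pblm weak continuous solution 2} by judicious choices of test functions, in two waves. First I would use test functions supported in the interior of $\Omeone$ or $\Ometwo$, so that every boundary and interface term drops, to recover the four bulk equations in the distributional sense; this simultaneously upgrades the regularity of the unknowns. Only once that regularity is in hand would I use \emph{general} test functions together with Green's formula to peel off the interface conditions on $\Gamma$ and the exterior boundary conditions.

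For the bulk equations: testing \eqref{Pblm weak continuous solution 2} with $[\bm 0,\qone]$ for arbitrary $\qone\in L^2(\Omeone)$ yields $\div\uone=F$ in $\Omeone$, i.e. \eqref{Eq conservative strong decomposed 0}. Testing \eqref{Pblm weak continuous solution 2} with $[\vtwo,0]$, $\vtwo\in\V(\Ometwo)$, gives $\int_{\Ometwo}(a\utwo+\grad\ptwo+\g)\cdot\vtwo=0$ for every $\vtwo\in\V(\Ometwo)$; since the test vectors range only over $\V(\Ometwo)$, the datum $\g$ is felt only through its orthogonal projection, so this amounts to the $\Ometwo$-momentum equation with $\g$ replaced by $P\g$, that is $a\utwo+\grad\ptwo+P\g=0$. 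Next, testing \eqref{Pblm weak continuous solution 1} with $\vone=\bm 0$ and $\qtwo$ ranging over functions compactly supported inside the components of $\Ometwo$ kills every $\Gamma$-integral and leaves $-\int_{\Ometwo}\utwo\cdot\grad\qtwo=\int_{\Ometwo}F\qtwo$, i.e. $\div\utwo=F$ \eqref{Eq conservative strong decomposed 1}; in particular $\utwo\in\Hdiv(\Ometwo)$. Symmetrically, testing \eqref{Pblm weak continuous solution 1} with $\qtwo=0$ and $\vone$ compactly supported in $\Omeone$ gives $a\uone+\grad\pone+\g=0$ distributionally \eqref{Eq Darcy Strong decomposed 0}, which in turn shows $\grad\pone\in\L^2(\Omeone)$, hence $\pone\in H^1(\Omeone)$.

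With $\utwo\in\Hdiv(\Ometwo)$ and $\pone\in H^1(\Omeone)$ secured, I would return to \eqref{Pblm weak continuous solution 1} with general test functions. Taking $\qtwo=0$ and integrating $-\int_{\Omeone}\pone\div\vone$ by parts, the bulk terms cancel by the already-proven identity $a\uone+\grad\pone+\g=0$, leaving only $\int_\Gamma(\ptwo-\pone-\stress)(\vone\cdot\n)\,dS-\int_{\partial\Omeone\cap\partial\Omega}\pone(\vone\cdot\outer)\,dS=0$. Because the normal trace of $\Hdiv(\Omeone)$ is surjective and can be prescribed independently on the interface $\Gamma$ and on $\partial\Omeone\cap\partial\Omega$, choosing $\vone$ supported near each piece separately yields the normal-stress balance $\ptwo-\pone=\stress$ on $\Gamma$ \eqref{Eq normal stress balance} and the drained condition $\pone=0$ on $\partial\Omeone\cap\partial\Omega$ \eqref{Eq Drained Condition decomposed}. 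Dually, taking $\vone=\bm 0$ and general $\qtwo$ and integrating $-\int_{\Ometwo}\utwo\cdot\grad\qtwo$ by parts (here one must use that on type-two regions $\n=-\outer$ on the interior boundary, so $\outer=-\n$ on $\Gamma$ and $\outer=\n$ on $\partial\Ometwo\cap\partial\Omega$), the divergence term cancels against the $F$-term and one is left with $\int_\Gamma(\uone\cdot\n-\utwo\cdot\n-\beta\ptwo-\flux)\qtwo\,dS-\int_{\partial\Ometwo\cap\partial\Omega}(\utwo\cdot\n)\qtwo\,dS=0$; separating the traces of $\qtwo$ on $\Gamma$ and on $\partial\Ometwo\cap\partial\Omega$ then gives the flux balance \eqref{Eq normal flux balance} and the no-flux condition $\utwo\cdot\n=0$ \eqref{Eq Non-Flux Condition decomposed}. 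Finally, when $\g\ind_{\Ometwo}\in\V(\Ometwo)$ we have $P\g=\g\ind_{\Ometwo}$, so the recovered $\Ometwo$-equation is the original one and the weak solution solves \eqref{Eq porous media strong decomposed} verbatim.

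I expect the main obstacle to be the correct ordering and sign bookkeeping rather than any deep analytic difficulty: one cannot integrate by parts to reach the interface relations until the interior equations have upgraded $\utwo$ to $\Hdiv(\Ometwo)$ and $\pone$ to $H^1(\Omeone)$, and the entire extraction of boundary data hinges on the sign convention \eqref{Def normal vector} for $\n$ across the two colors together with the independent surjectivity of the normal and Dirichlet traces on the interface versus the exterior boundary. The appearance of $P\g$ is the one genuinely non-cosmetic point, forced precisely because the test space for the $\Ometwo$-momentum equation is $\V(\Ometwo)=\grad(E(\Ometwo))$ rather than all of $\L^2(\Ometwo)$.
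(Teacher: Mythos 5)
Your proposal is correct and follows essentially the same route as the paper's proof: recover the four bulk equations first (the $\Omega_{2}$-Darcy law appearing with $P\g$ because the test space is only $\V(\Omega_{2})$), then use general test functions, Green's formula, the sign convention \eqref{Def normal vector}, and surjectivity of the trace maps to split $\partial\Omega_{1}$ and $\partial\Omega_{2}$ into $\Gamma$ and the exterior boundary and read off the interface and boundary conditions. Your explicit remark that the interior equations must first upgrade $\pone$ to $H^{1}(\Omega_{1})$ and $\utwo$ to $\Hdiv(\Omega_{2})$ before integrating by parts is a point the paper handles only implicitly, but it is not a different method.
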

\begin{proof} First we focus on recovering the constitutive and conservative equations \eqref{Eq Darcy Strong decomposed 0}, \eqref{Eq conservative strong decomposed 0}, \eqref{Eq conservative strong decomposed 1} and \eqref{Eq Darcy Strong decomposed 1}, on the domains $\Omega_{1}$ and $\Omega_{2}$ respectively. 

   Choose $[\bm{0}, \qone ] \in \Y$ and test equation \eqref{Pblm weak continuous solution 2} to get $\int_{\Omega_{1}} \div \uone\, \qone = \int_{\Omega_{1}} F\, \qone$, for all $\qone \in L^{2}(\Omega_{1})$. This yields the strong conservation equation \eqref{Eq conservative strong decomposed 0}. Next choose $[\vtwo, 0] \in \Y$ and test equation \eqref{Pblm weak continuous solution 2} to get $\int_{\Omega_{2}} (\grad \ptwo + a \, \utwo)\cdot \vtwo = - \int_{\Omega_{2}} \g\cdot \vtwo$. Clearly $\grad \ptwo + a \, \utwo\in \V(\Omega_{2})$ and the previous equality holds for all $\vtwo\in \V(\Omega_{2})$. From here it follows that $\grad \ptwo + a \, \utwo = - P \g$, where $P\g$ is the orthogonal projection of $\g$ onto $\V(\Omega_{2})$. This gives the constitutive Darcy equation \eqref{Eq Darcy Strong decomposed 1} with the forcing term $\g$ replaced by its projection $P\g$. Now let $\Phi\in [C_{0}^{\infty}(\Omega_{1})]^{N}$ then, testing equation \eqref{Pblm weak continuous solution 1} with $[\Phi, 0]\in \X$ we get
   \begin{equation*}
\int_{\Omega_1}  a \, \uone \cdot \Phi 
- \int_{\Omega_1} \pone \,\div\Phi\,
= 
- \int_{\Omega_{1}} \g \cdot \Phi .
\end{equation*}
The above holds for all $\Phi\in [C_{0}^{\infty}(\Omega_{1})]^{N}$ then, the equation \eqref{Eq Darcy Strong decomposed 0} follows in the $\H^{-1}(\Omega_{1})$-sense. Moreover, recalling that $\uone, \g\in \L^{2}(\Omega_{1})$, the strong constitutive Darcy equation \eqref{Eq Darcy Strong decomposed 0} also holds in the $\L^{2}(\Omega_{1})$-sense. Now let $\varphi \in C_{0}^{\infty}(\Omega_{1})$ and test equation \eqref{Pblm weak continuous solution 1} with $[\bm{0}, \varphi]\in \X$ to get $- \int_{\Omega_{2}} \utwo \cdot \grad \varphi = \int_{\Omega_{2}}F\, \varphi$. Since this holds for all smooth functions the strong conservative equation \eqref{Eq conservative strong decomposed 1} follows. 

Next we focus on the boundary and the interface conditions. Take $\vone\in \Hdiv(\Omega_{1})$, test \eqref{Eq Darcy Strong decomposed 0} with $[\vone, 0]\in \X$, integrate by parts and get
\begin{equation*}
\int_{\Omega_1}  a\, \uone \cdot \vone 
+\int_{\,\Gamma}\ptwo \big(\vone\cdot\n \big)\, d S \\
+ \int_{\Omega_1} \grad \pone \cdot \vone
- \int_{\partial \Omega_1} \pone \,\big( \vone \cdot \outer \big) \, dS\,
= - \int_{\Omega_{1}} \g \cdot \vone
+ \int_{\Gamma} \stress\, \big(\vone\cdot\n \big)\, dS .
\end{equation*}
We split the boundary term on $\partial \Omega_{1}$ in two pieces, the interfaces network $\Gamma$ and the outer part $\partial \Omega_{1} - \Gamma = \partial \Omega_{1} \cap \partial \Omega$ and replace $\outer$ with $\n$ using the relationship given in Definition \ref{Def geometric convention WVF}, equation \eqref{Def normal vector}.  Additionally, recalling that the identity \eqref{Eq Darcy Strong decomposed 0} is satisfied, the expression above writes as
\begin{equation*}
\int_{\,\Gamma}\ptwo \big(\vone\cdot\n \big)\, d S \\
- \int_{\Gamma} \pone \,\big(\vone \cdot \outer\big) \, dS
- \int_{\partial \Omega_1 \cap \partial \Omega } \pone \,\big(\vone \cdot \outer\big) \, dS\,
= \int_{\Gamma} \stress\, \big(\vone\cdot\n \big)\, dS .
\end{equation*}
Since the above holds for all $\vone \in \Hdiv(\Omega_{1})$ and the map $\vone\mapsto \vone\cdot \n$ from $\Hdiv(\Omega_{1})$ onto $H^{-1/2}(\partial \Omega_{1})$ is surjective, the normal stress balance condition across the interfaces \eqref{Eq normal stress balance} and the drained Dirichlet boundary condition \eqref{Eq Drained Condition decomposed} follow in the sense of $H^{1/2}(\Gamma)$ and $H^{1/2}(\partial\Omega_{1}\cap \partial \Omega)$ respectively. Finally, taking $\qtwo\in H^{1}(\Omega_{2})$, testing \eqref{Eq Darcy Strong decomposed 0} with $[\bm{0}, \qtwo]\in \X$ and integrating by parts we get
\begin{equation}
\int_{\Gamma}  \beta \, \ptwo \, \qtwo \, dS 
- \int_{\,\Gamma}\big(\uone\cdot\n \big) \qtwo\, d S \\
+ \int_{\Omega_{2}} \div \utwo \, \qtwo
- \int_{\partial \Omega_{2}} \big(\utwo \cdot \outer\big) \qtwo\, dS
= \int_{\Omega_{2}}F\, \qtwo 
- \int_{\Gamma} \flux\, \qtwo\, dS .
\end{equation}
Again, we split the boundary term on $\partial \Omega_{2}$ in two pieces, the interfaces network $\Gamma$ and the outer part $\partial\Omega_{2} - \Gamma = \partial \Omega_{2} \cap \partial \Omega$; then replace $\outer$ with $\n$ using the relationship given in Definition \ref{Def geometric convention WVF}, equation \eqref{Def normal vector}.  Since the identity \eqref{Eq conservative strong decomposed 1} is satisfied, the expression above reduces to
\begin{equation}
\int_{\Gamma}  \beta \, \ptwo \, \qtwo \, dS 
- \int_{\,\Gamma}\left(\uone\cdot\n\right) \qtwo\, d S \\
+ \int_{\Gamma} (\utwo \cdot \n) \qtwo\, dS
- \int_{\partial \Omega_{2} \cap \Omega} (\utwo \cdot \n) \qtwo\, dS 
= 
-\int_{\Gamma} \flux\, \qtwo\, dS .
\end{equation}
Observing that the above holds for all $\qtwo\in E(\Omega_{2})$, it follows that the normal flux balance condition across the interfaces \eqref{Eq normal flux balance} and the null normal flux boundary condition \eqref{Eq Non-Flux Condition decomposed} hold, in the sense of $H^{-1/2}(\Gamma)$ and $H^{-1/2}(\partial \Omega_{2}\cap \partial \Omega)$ respectively. This completes the proof.
   \qed
\end{proof}
Finally in order to identify which forcing terms can be modeled using this formulation, this section closes characterizing the orthogonal projection onto the spaces $\V (\Omega_{2} )$ and $\V^{\perp} (\Omega_{2} )$.
\begin{lemma}\label{Th orthogonal projections onto divergence free and orthogonal spaces}
	Let $\v\in \L^{2}(\Omega_{2})$. Let $\xi \in H_{0}^{1}(\Omega)\subseteq E(\Omega_{2})$ and $\eta\in E_{0}(\Omega_{2})$ be the unique solutions of the respective Dirichlet and Neumann problems
	\begin{subequations}\label{Pblm projection problems}
		\begin{align}\label{Pblm correction of the divergence}
		& - \div \grad \xi = - \div \v   \quad \text{in} \; \Omega_{2} , &
		& \xi = 0  \quad \text{on} \; \partial \Omega_{2} .
		\end{align}
		\begin{align}\label{Pblm correction of the normal trace}
		& - \div \grad \eta = 0   \quad \text{in} \; \Omega_{2} , &
		& \grad \eta\cdot \n = (\v - \grad \xi)\cdot\n  \quad \text{on} \; \partial \Omega_{2} .
		\end{align}
	\end{subequations}
	Then, $\v-\grad \xi - \grad \eta$ is the projection of $\v$ onto $\V^{\perp}(\Omega_{2})$ and $\grad \xi  + \grad \eta$ is the projection of $\v$ onto $\V(\Omega_{2})$.
\end{lemma}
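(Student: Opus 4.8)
The plan is to recognize the claim as the orthogonal (Helmholtz-type) decomposition of $\v$ relative to the splitting $\L^{2}(\Omega_{2}) = \V(\Omega_{2})\oplus\V^{\perp}(\Omega_{2})$. This splitting is legitimate because $\V(\Omega_{2}) = \grad(E(\Omega_{2}))$ is a closed subspace of the Hilbert space $\L^{2}(\Omega_{2})$ by Lemma \ref{Th Completeness of V(Omega_2)}. Since the decomposition of a vector into its components in a closed subspace and its orthogonal complement is unique, it suffices to prove the two membership statements $\grad\xi + \grad\eta\in\V(\Omega_{2})$ and $\v - \grad\xi - \grad\eta\in\V^{\perp}(\Omega_{2})$; both projection identities then follow at once.

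First I would fix the precise variational meaning of the two auxiliary problems and settle their well-posedness. The Dirichlet problem \eqref{Pblm correction of the divergence} is read as: find $\xi\in H_{0}^{1}(\Omega_{2})$ with $\int_{\Omega_{2}}\grad\xi\cdot\grad\phi = \int_{\Omega_{2}}\v\cdot\grad\phi$ for every $\phi\in H_{0}^{1}(\Omega_{2})$; coercivity is the classical Poincar\'e inequality, so Lax--Milgram gives a unique $\xi$, and since $\xi$ vanishes on $\partial\Omega_{2}\supseteq\Gamma$ its trace on $\Gamma$ is zero, whence $\xi\in E(\Omega_{2})$. The Neumann problem \eqref{Pblm correction of the normal trace} I would pose on the coupled space $E_{0}(\Omega_{2})$: find $\eta\in E_{0}(\Omega_{2})$ with $\int_{\Omega_{2}}\grad\eta\cdot\grad\phi = \int_{\Omega_{2}}(\v - \grad\xi)\cdot\grad\phi$ for every $\phi\in E_{0}(\Omega_{2})$. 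The bilinear form is bounded and the right-hand side is a bounded functional by Cauchy--Schwarz, while coercivity on $E_{0}(\Omega_{2})$ is exactly the Poincar\'e-type inequality \eqref{Ineq Control on Space Generating with Isomorphism} furnished by Lemma \ref{Th Completeness of V(Omega_2)}(i); Lax--Milgram then delivers a unique $\eta\in E_{0}(\Omega_{2})$.

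The membership statements are then short. Both $\xi$ and $\eta$ lie in $E(\Omega_{2})$, so $\grad\xi + \grad\eta = \grad(\xi+\eta)\in\grad(E(\Omega_{2})) = \V(\Omega_{2})$. For the orthogonality, let $q\in E(\Omega_{2})$ be arbitrary and put $\phi\defining q - \frac{1}{\vert\Omega_{2}\vert}\int_{\Omega_{2}}q\in E_{0}(\Omega_{2})$, so that $\grad\phi = \grad q$. Testing the weak Neumann equation with this $\phi$ gives $\int_{\Omega_{2}}\grad\eta\cdot\grad q = \int_{\Omega_{2}}(\v - \grad\xi)\cdot\grad q$, that is $\int_{\Omega_{2}}(\v - \grad\xi - \grad\eta)\cdot\grad q = 0$. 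As $q$ was arbitrary and $\V(\Omega_{2}) = \grad(E(\Omega_{2}))$, this is precisely $\v - \grad\xi - \grad\eta\in\V^{\perp}(\Omega_{2})$, and uniqueness of the decomposition finishes the argument. To justify calling these the stated strong problems, I would test the weak forms with $\phi\in C_{0}^{\infty}(M)$ on each component $M\in\ttwo$ to recover $-\div\grad\xi = -\div\v$ and $-\div\grad\eta = 0$ distributionally, then integrate by parts componentwise and invoke the sign convention \eqref{Def normal vector} relating $\n$ and $\outer$ together with surjectivity of the normal trace to recover the boundary conditions $\xi = 0$ and $\grad\eta\cdot\n = (\v - \grad\xi)\cdot\n$ on $\partial\Omega_{2}$.

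The one genuinely delicate point is the well-posedness of the Neumann problem. Posed component-by-component the operator $-\div\grad$ with Neumann data has a kernel of dimension equal to the number of regions $M\in\ttwo$ (one additive constant per component), so the single constraint $\int_{\Omega_{2}}\eta = 0$ would not pin down $\eta$. What rescues uniqueness is that the problem lives in $E_{0}(\Omega_{2})$, whose trace-matching condition across the interface network $\Gamma$ couples the components; this coupling is exactly what makes $\rtwo\mapsto\Vert\grad\rtwo\Vert_{0,\Omega_{2}}$ a norm and yields the coercivity estimate of Lemma \ref{Th Completeness of V(Omega_2)}. I expect this to be the crux, while the orthogonality computation, once the Neumann problem is set up variationally, is essentially a restatement of its weak form.
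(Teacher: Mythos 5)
Your proposal is correct and takes essentially the same route as the paper: both arguments reduce the claim to the uniqueness of the orthogonal decomposition of $\L^{2}(\Omega_{2})$ into the closed subspace $\V(\Omega_{2})$ and its complement, and both use the Dirichlet problem to kill the divergence and the Neumann problem on the coupled space $E_{0}(\Omega_{2})$ to kill the remaining normal component. The only real difference is one of bookkeeping: the paper verifies $\v-\grad\xi-\grad\eta\in\V^{\perp}(\Omega_{2})$ by first characterizing $\V^{\perp}(\Omega_{2})$ as the divergence-free fields with vanishing normal trace and then checking the compatibility condition $\langle(\v-\grad\xi)\cdot\n,1\rangle=0$ for solvability of the Neumann problem, whereas you obtain the orthogonality directly by testing the weak Neumann equation against $\grad(E(\Omega_{2}))$ after securing well-posedness via Lax--Milgram and the coercivity estimate of Lemma \ref{Th Completeness of V(Omega_2)} --- an equivalent but more self-contained verification that also makes explicit (in your remark on the per-component kernel of the Neumann operator) why the trace-coupled space $E_{0}(\Omega_{2})$, rather than a componentwise zero-mean condition, is what delivers uniqueness.
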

\begin{proof} First recall that since $C_{0}^{\infty}(\Omega_{2})\subseteq E(\Omega_{2})$ then
\begin{equation*}
\V^{\perp}(\Omega) = 
\big\{\v\in \L^{2}(\Omega_{2}): \div \v = 0,\, \text{in }\, \Omega_{2}\; \text{and }
\, \v\cdot \n = 0\,, \; \text{on }\, \partial \Omega_{2}\big\},
\end{equation*}
i.e. $\V^{\perp}(\Omega_{2}) \subseteq \Hdiv(\Omega_{2})$. 
Next, observe that since $\xi\in H_{0}^{1}(\Omega_{2})$ then $\grad \xi \in \L^{2}(\Omega_{2})$, and because it is a solution to problem \eqref{Pblm correction of the divergence} it follows that $\div(\v - \grad \xi) = 0$ i.e., $\v - \grad \xi \in \Hdiv(\Omega_{2})$. Moreover, for any $q\in H^{1}(\Omega_{2})$ it holds that
	\begin{equation*}     
	\big\langle (\v -\grad \xi)\cdot\n, q
	\big\rangle_{H^{-1/2}(\partial \Omega_{2}), H^{-1/2}(\partial \Omega_{2})} = 
	\int_{\Omega_{2}} (\v -\grad \xi)\cdot \grad q  .
	\end{equation*}
	In particular, if $q = 1$ then $\big\langle (\v -\grad \xi)\cdot\n, 1\big\rangle_{H^{-1/2}(\partial \Omega_{2}), H^{-1/2}(\partial \Omega_{2})} = 0$ i.e., the data for the Neumann problem \eqref{Pblm correction of the normal trace} satisfy the compatibility condition and the problem has a unique solution $\eta$ in $E_{0}(\Omega_{2})$. Now, it is clear that $\v - \grad \xi -\grad \eta \in \H(\Omega_{2})$ and since $\v - (\v - \grad \xi -\grad \eta) = \grad \xi +\grad \eta$ is orthogonal to $\H(\Omega_{2})$, the result follows due to the characterization of orthogonal projections in Hilbert spaces. 
	\qed
\end{proof}
%
%
%
%
%
%
\section{Conclusions and Final Discussion}\label{Sec Conclusion}
%
%
%
The present work yields several as summarized below.
\begin{enumerate}[(i)]
\item The study of communication in complex networks, as stated in \cite{EstradaComplexNetworks,ComplexNetworksMarteenVanSteen}, can be perplexing due to the large number of nodes and links. One of the greatest achievements of this article is to provide a relatively simple upscaled description to an otherwise very complicated study. This result gives the grounds to use well-studied numerical methods such as Finite Elements on Complex Network Theory.
	
\item The function $a(\cdot)$ presented in the strong form \eqref{Eq Darcy Strong decomposed 0}, \eqref{Eq Darcy Strong decomposed 0}, can be used as a scaling tool. If the volume of a region does not correctly reflect the impact of the universe it represents within the multiverse then, $a(\cdot)$ can be scaled in that region to counterbalance this deficiency and weigh each region of the map accordingly. 

\item The interface conditions \eqref{Eq normal flux balance} and \eqref{Eq normal stress balance}, allow discontinuities along paths in the domain. This permits the modeling of sinks and/or sources on different manifolds along the region of interest.

\item The method for PDE analysis on graphs presented in this article and the previous achievements in the literature are dual concepts. The preexisting results rely on complex definitions for the operators of the PDE and simple definitions for the domain. This work introduces simple definitions for the operators of the PDE and very restrictive definitions for the graph. As many results in mathematics have shown, complementing dual concepts and a wise interplay between them, are the key for a very strong theory. The result presented proves that this method for analyzing PDE on graphs is the gateway for significantly deeper understanding in the field.

\item In the articles \cite{MoralesShow2} and \cite{Morales2}, a similar mixed variational formulation is introduced in order to model the saturated fluid flow within a fractured porous medium. This result is an alternative version, allowing to treat fractures with substantially more general geometry. Additionally, the mixed variational formulation presented here is, to the authors' best knowledge, unprecedented in the specialized literature of PDE analysis.  

\end{enumerate}
For the limitations of the method we point out the following ones.
\begin{enumerate}[(i)]
	\item There are two particularly important cases of discretization for 3-D polygonal domains in a bipartite fashion, using tetrahedra as discussed in \cite{MoralesOsorio}, or using in cubes. For both cases, it is not difficult to prove that a subdivision of the graph $K_{3,3}$ is contained in most of the graphs, defined by the ``tetrahedral grids" or the ``cubic grids", associated to a given domain. Therefore, most of the time, the associated graph is nonplanar and bipartite. Although in these scenarios the domain for the PDE setting is already defined, the nonplanar structure of their natural associated graphs suggests that the analysis for bipartite, nonplanar graphs is an important issue still to be addressed.      
	
	\item The variational formulation is dependent upon the domain chosen for the graph at hand. Thus, the construction of the domain that suits the application best will depend on the context. In this article we have suggested two possible choices, namely the tubular and downscaling maps.

\end{enumerate}
Finally, for further work we highlight the following aspects.
\begin{enumerate}[(i)]
	
 	\item The method of analyzing highly clustered complex networks presented here is based upon the assumption that, approximating the behavior of a large number of particles with an open set, introduces an acceptable error when they are highly clustered. In order to understand the extent of this claim, further work about the error introduced by this approximation is of central importance. Notice that the quality of the approximation may very well depend on the nature of the quantities one is considering, which will impact in the PDE model. This aspect will be explored in future work.
	
	\item The Darcy Flow Model is a more general case of the Poisson equation whose unique differential operator is the Laplacian. Discrete versions of the Laplacian Operator for graphs are presented in \cite{GraphsPDE, WaveGraphs, AlgebraicGraph}. A natural conjecture is that the discrete Laplacian operator will approximate the continuous version if the number of nodes is very large. This conjecture will be addressed in future work, because it may provide the foundation to justify the claim that, jumping from discrete to continuous is a reasonable estimate for the global behavior when the number of nodes is large.
	
	\item The formulation introduced divides the domain in two types of regions, namely $\Omega_{1}$ and $\Omega_{2}$ which is the reason why the main setting of the problem had to be bipartite. Since every plane map is four coloreable it remains for future work  to find a formulation addressing up to four types of regions.
	\end{enumerate}
\section*{Acknowledgements}
The authors wish to acknowledge Universidad Nacional de Colombia, Sede Medell\'in for its support in this work through the project HERMES 27798. They also wish to thank Tom\'as Mej\'ia for his constructive criticism and insightful suggestions in this work. 
\section*{References}
%
%
%
%

%
%
\end{document}